\numberwithin{equation}{section}
\newcommand{\seminorm}[1]{\left\lvert\hspace{-1 pt}\left\lvert\hspace{-1 pt}\left\lvert {#1}\right\lvert\hspace{-1 pt}\right\lvert\hspace{-1 pt}\right\lvert}
\providecommand{\algorithmname}{Algorithm}
\newtheorem{theorem}{Theorem}[section]
\newtheorem{lem}{Lemma}[section]
\newtheorem{cor}{Corollary}[section]
\newcounter{hypA}
\newenvironment{hypA}{\refstepcounter{hypA}\begin{itemize}
  \item[({\bf A\arabic{hypA}})]}{\end{itemize}}
\newcounter{hypB}
\newcounter{hypD}
\date{}
\newcommand\smallO{
  \mathchoice
    {{\scriptstyle\mathcal{O}}}
    {{\scriptstyle\mathcal{O}}}
    {{\scriptscriptstyle\mathcal{O}}}
    {\scalebox{.7}{$\scriptscriptstyle\mathcal{O}$}}
  }
\begin{document}

\begin{center}

{\Large \textbf{Asymptotic Variance in the Central Limit Theorem for Multilevel Markovian Stochastic Approximation}}

\vspace{0.5cm}

AJAY JASRA \& ABYLAY ZHUMEKENOV

{\footnotesize 
School of Data Science, The Chinese University of Hong Kong, Shenzhen, Shenzhen, CN.}
{\footnotesize E-Mail:\,} \texttt{\emph{\footnotesize ajayjasra@cuhk.edu.cn,  abylayzhumekenov@cuhk.edu.cn}}

\begin{abstract}
In this note we consider the finite-dimensional parameter estimation problem associated to inverse problems.
In such scenarios,  one seeks to maximize the marginal likelihood associated to a Bayesian model.
This latter model is connected to the solution of partial or ordinary differential equation.  As such,  there are two
primary difficulties in maximizing the marginal likelihood (i) that the solution of differential equation is not always analytically tractable and (ii) neither is the marginal likelihood.   Typically (i) is dealt with using a numerical solution of the differential equation,  leading to a numerical bias and (ii) has been well studied in the literature using,  for instance,  Markovian stochastic approximation (e.g.~\cite{andr}).  It is well-known,  e.g.~\cite{ub_grad_new,ub_inv,disc_models},  that to reduce the computational
effort to obtain the maximal value of the parameter,  one can use a hierarchy of solutions of the differential equation and combine with stochastic gradient methods.  The approaches of \cite{ub_grad_new,ub_inv} do exactly this.  In this paper we consider the asymptotic variance in the central limit theorem,  associated to estimates from 
\cite{ub_inv} and find bounds on the asymptotic variance in terms of the precision of the solution of the differential equation.  The significance of these bounds are the that they provide missing theoretical guidelines on how to set simulation parameters in \cite{ub_grad_new,ub_inv}; that is,  these appear to be the first mathematical results which help to run the methods efficiently in practice.
\\
\noindent \textbf{Key words}: Central Limit Theorem;  Markovian Stochastic Approximation; Multilevel Monte Carlo.
\end{abstract}

\end{center}

\section{Introduction}

We consider the problem of finding the root of the function
$$
h(\theta) = \int_{\mathsf{X}}H(\theta,x)\pi_{\theta}(x)dx
$$
where for each $\theta\in\Theta\subseteq\mathbb{R}$,  $\pi_{\theta}$ is a probability density on the space $(\mathsf{X},\mathscr{X})$,  $dx$ is a $\sigma-$finite measure and $H:\Theta\times\mathsf{X}\rightarrow\Theta$ is, for each $\theta\in\Theta$,   a $\pi_{\theta}-$integrable function.   We assume that there exists a unique $\theta^{\star}\in\Theta$ which such that $h(\theta^{\star})=0$.   In the kind of problems that we are interested in,
$H(\theta,x)$ is the gradient w.r.t.~$\theta$ of the log of an un-normalized probability density; examples can be found in \cite{disc_models}.  The problem given here can be solved using Markovian stochastic approximation (MSA) (e.g.~\cite{andr}) especially if $\pi_{\theta}$ cannot be sampled from exactly.  We review the MSA method in the next Section.
In many applications $\pi_{\theta}$ and $H(\theta,x)$ is associated with the solution of a partial or ordinary differential equation and in such scenarios one replaces the exact solution with a provably convergent numerical solution.  Then one can proceed by trying to solve the approximated problem using MSA;  this is detailed in the next Section.

It is well-known (see for instance \cite{ml_rev}) that one can reduce the computational effort to compute the root (maximizer) associated to the approximated $h(\theta)$.  Instead of considering simply one MSA algorithm with a numerical solution of the differential equation of a given accuracy,  one considers several solutions simultaneously. This leads to multilevel Markovian stochastic approximation algorithms as was developed originally in \cite{ub_grad_new} and applied directly to partial/ordinary differential equation problems in \cite{ub_inv}.
The articles \cite{ub_grad_new,ub_inv} focus on developing new methodology and proving that unbiased estimates of $\theta^{\star}$ can be obtained,  but critically,  rely on mathematical conjectures in order to set certain simulation parameters.  The optimal setting of said simulation parameters is often key to ensuring a reduction in computational effort,  of using multilevel MSA versus MSA on its own.  In this note we intend to fill in the theoretical gap from \cite{ub_inv}.

In this note we consider the asymptotic variance expression from the central limit theorem (CLT) proved by \cite{fort} for MSA.  The expression is adapted for the estimators that are under study in this work,  i.e.~of multilevel type (explained in the next Section).
In particular and under mathematical assumptions,  we provide a non-trivial upper-bound for the asymptotic variance in the CLT,  in terms of the accuracy of the numerical solver of the differential equation.  This upper-bound is critical in providing a mathematical justification of how to set the simulation parameters in \cite{ub_inv}.
It should be noted that our approach is limited to $\theta\in\Theta\subseteq\mathbb{R}$. The reason for this is because the general ($d-$dimensional) case in \cite{fort} has the asymptotic (co)variance in the form of a solution to Lyapunov's equation.  Whilst under fairly general assumptions one has a unique to solution to Lyapunov's equation,   the general solution is analytically intractable,  at least in a form that could be used to bound it in an appropriate norm.  Thus we consider only the one-dimensional case.   We also remark that we rely on asymptotics such as the CLT,  instead of finite time (algorithm time) bounds as the latter do not appear to be in the literature and is more general problem than the one that we seek to address in this note.

This note is structured as follows.  In Section \ref{sec:setting} we give details of the problem of interest and the algorithm which we study.  Section \ref{sec:math} gives the mathematical set-up for our main result,  including notations and assumptions.  In Section \ref{sec:main} our main result is given along with some implications.
Appendix \ref{app:tech} features several technical results used the prove the theorem in Section \ref{sec:main}.

\section{Setting}\label{sec:setting}

\subsection{Approach}

 In many applications in practice,  obtaining $\theta^{\star}$ can be solved by using MSA as in e.g.~\cite{andr}.
For any $\theta\in\Theta$ let $K_{\theta}:\mathsf{X}\times\mathscr{X}\rightarrow[0,1]$ be a $\pi_{\theta}-$invariant Markov kernel and let $(\gamma_n)_{n\in\mathbb{N}}$ be a sequence of non-negative numbers such
that $\sum_{n\geq 1}\gamma_n=\infty$,  $\sum_{n\geq 1}\gamma_n^2<\infty$, $\log(\gamma_n/\gamma_{n-1})=\smallO(\gamma_n)$; this latter assumption is \cite[Condition  4a]{fort},  which we use later on.
Then given $(\theta_0,x_0)\in\Theta\times\mathsf{X}$ one can
run the following procedure,  for $n\geq 1$:
\begin{enumerate}
\item{Simulate $X_n|(\theta_0,x_0),\dots,(\theta_{n-1},x_{n-1})$ using $K_{\theta_{n-1}}(x_{n-1},\cdot)$.}
\item{Update 
$$
\theta_n = \theta_{n-1} + \gamma_n H(\theta_{n-1},X_n)
$$
and set $n=n+1$ and go to the start of 1..}
\end{enumerate}
In many cases one employs the idea of reprojection (e.g.~\cite{andr,fort}) and we also do this.   Let $(\mathscr{K}_n)_{n\geq 0}$ be a sequence of compact subsets of $\Theta$ such that for each $n\geq 1$,   $\mathscr{K}_{n-1}\subseteq\mathscr{K}_n$ and
$$
\bigcup_{n\geq 0}\mathscr{K}_n = \Theta.
$$
Then we modify the above procedure as follows: set $(\theta_0,x_0,\psi_0)\in\mathscr{K}_0\times\mathsf{X}\times\{0\}$ then we consider the procedure or $n\geq 1$:
\begin{enumerate}
\item{Simulate $X_n|(\theta_0,x_0,\psi_0),\dots,(\theta_{n-1},x_{n-1},\psi_{n-1})$ using $K_{\theta_{n-1}}(x_{n-1},\cdot)$.}
\item{Set
$$
\theta_{n-1/2} = \theta_{n-1} + \gamma_n H(\theta_{n-1},X_n).
$$
}
\item{Update:
$$
(\theta_n,\psi_n) = 
\left\{\begin{array}{ll}
(\theta_{n-1/2},\psi_{n-1}) &  \text{if}~\theta_{n-1/2}\in\mathscr{K}_{\psi_{n-1}},\\
(\theta_0,\psi_{n-1}+1) &\text{otherwise} 
\end{array} \right.
$$
and set $n=n+1$ and go to the start of 1..}
\end{enumerate}

\subsection{Approximation}

In several applications,  such as \cite{ub_grad_new,ub_inv} working with the exact model is not possible.
Instead,  one has a scalar parameter $l\in\mathbb{N}_0$,  a sequence of functions $H_l(\theta,x)$ and sequence
of probability densities $\pi_{\theta,l}$ on $(\mathsf{X},\mathscr{X})$ such that for each $\theta\in\Theta$:
$$
h_l(\theta) := \int_{\mathsf{X}}H_l(\theta,x)\pi_{\theta,l}(x)dx~\text{and}~\lim_{l\rightarrow\infty}h_l(\theta) = h(\theta).
$$
We shall assume that there exists a unique $\theta^{\star}_l\in\Theta$ such that $h_l(\theta^{\star}_l)=0$
and note that,  we have $\lim_{l\rightarrow\infty}\theta^{\star}_l=\theta^{\star}$.
It should be understood that
\begin{itemize}
\item{$h_{l}(\theta)$ is closer,  in terms of $L_1-$distance than $h_{l-1}(\theta)$ to $h(\theta)$.}
\item{The computational cost of working with $(H_l,\pi_{\theta,l})$ is larger than working with $(H_{l-1},\pi_{\theta,l-1})$.}
\end{itemize}
Explicit examples can be found in \cite{ub_grad_new,ub_inv} and we note in \cite{ub_grad_new}, the space $(\mathsf{X},\mathscr{X})$ would also change with $l$.

For any $(\theta,l)\in\Theta\times\mathbb{N}_0$ let $K_{\theta,l}:\mathsf{X}\times\mathscr{X}\rightarrow[0,1]$ be a $\pi_{\theta,l}-$invariant Markov kernel.  We can use the following:  set $(\theta_{0,l},x_0,\psi_0)\in\mathscr{K}_0\times\mathsf{X}\times\{0\}$ then we consider the procedure for $n\geq 1$:
\begin{enumerate}
\item{Simulate $X_n|(\theta_{0,l},x_0,\psi_0),\dots,(\theta_{n-1,l},x_{n-1},\psi_{n-1})$ using $K_{\theta_{n-1,l},l}(x_{n-1},\cdot)$.}
\item{Set
$$
\theta_{n-1/2,l} = \theta_{n-1,l} + \gamma_n H(\theta_{n-1,l},X_n).
$$
}
\item{Update:
$$
(\theta_{n,l},\psi_n) = 
\left\{\begin{array}{ll}
(\theta_{n-1/2,l},\psi_{n-1}) &  \text{if}~\theta_{n-1/2,l}\in\mathscr{K}_{\psi_{n-1}},\\
(\theta_{0,l},\psi_{n-1}+1) &\text{otherwise} 
\end{array} \right.
$$
and set $n=n+1$ and go to the start of 1..}
\end{enumerate}

In the case of using unbiased and multilevel estimation schemes,  one is often interested in approximating the collapsing sum, for $L$ fixed or growing:
$$
\theta_L^{\star} = \theta_0^{\star} + \sum_{l=1}^L \left\{\theta_l^{\star}-\theta_{l-1}^{\star}\right\}.
$$
The idea is that for $l=0$ one can use the method detailed above and for each summand on the R.H.S.~one can use a coupled version,  which we will now introduce,   independently across the summands. 
For any $(\theta,\overline{\theta},l,\overline{l})\in\Theta^2\times\mathbb{N}_0^2$,  $\boldsymbol{\theta}=(\theta,\overline{\theta})$,  $\mathbf{l}=(l,\overline{l})$, let $\check{K}_{\boldsymbol{\theta},\mathbf{l}}:\mathsf{X}^2\times\mathscr{X}^2\rightarrow[0,1]$ be a Markov kernel such that
for any $(x,\overline{x},\mathsf{A})\in\mathsf{X}^2\times\mathscr{X}$ we have that
\begin{eqnarray}
\int_{\mathsf{A}\times\mathsf{X}} \check{K}_{\boldsymbol{\theta},\mathbf{l}}\left((x,\overline{x}),d(x,\overline{x}')\right) & = &  \int_{\mathsf{A}} K_{\theta,l}(x,dx')~\text{and}\label{eq:coup1}\\
\int_{\mathsf{X}\times\mathsf{A}} \check{K}_{\boldsymbol{\theta},\mathbf{l}}\left((x,\overline{x}),d(x,,\overline{x}')\right) & = &  \int_{\mathsf{A}} K_{\overline{\theta},\overline{l}}(\overline{x},d\overline{x}').\label{eq:coup2}
\end{eqnarray}
Note that for $l\in\mathbb{N}_0$ given,  if $\overline{l}=l-1$ we use the subscript $l$ in 
$\check{K}_{\boldsymbol{\theta},l}$ instead of writing $\check{K}_{\boldsymbol{\theta},\mathbf{l}}$.  An obvious example of
such a kernel is 
$$
\check{K}_{\boldsymbol{\theta},\mathbf{l}}\left((x,\overline{x}),d(x,\overline{x}')\right)= 
K_{\theta,l}(x,dx')K_{\overline{\theta},\overline{l}}(\overline{x},d\overline{x}')
$$
however more sophisticated examples are possible;  see \cite{disc_models}
for instance.
We can use the following to estimate $\theta_l^{\star}-\theta_{l-1}^{\star}$.  Set $(\theta_{0,l},\overline{\theta}_{0,l-1},x_{0,l},\overline{x}_{0,l-1},\psi_{0,l})\in\mathscr{K}_0^2\times\mathsf{X}^2\times\{0\}$ and
we shall denote in 1.~below $|\cdots$ to mean conditioning on 
$$
(\theta_{0,l},\overline{\theta}_{0,l-1},x_{0,l},\overline{x}_{0,l-1},\psi_{0,l}),\dots
(\theta_{n-1,l},\overline{\theta}_{n-1,l-1},x_{n-1,l},\overline{x}_{n-1,l-1},\psi_{n-1,l}).
$$
We consider the procedure for $n\geq 1$:
\begin{enumerate}
\item{Simulate $(X_{n,l},\overline{X}_{n,l-1})|\cdots$ using $\check{K}_{\boldsymbol{\theta}_{n-1},l}\left((x_{n,l},\overline{x}_{n,l-1}),\cdot\right)$.}
\item{Set
\begin{eqnarray*}
\theta_{n-1/2,l} & =&  \theta_{n-1,l} + \gamma_n H_l(\theta_{n-1,l},X_{n,l}) \\
\overline{\theta}_{n-1/2,l-1} & =&  \overline{\theta}_{n-1,l-1} + \gamma_n H_{l-1}(\overline{\theta}_{n-1,l-1},\overline{X}_{n,l-1})
\end{eqnarray*}
}
\item{Update:
$$
(\theta_{n,l},\overline{\theta}_{n,l-1},\psi_{n,l}) = 
\left\{\begin{array}{ll}
(\theta_{n-1/2,l},\overline{\theta}_{n-1/2,l-1},\psi_{n-1,l}) &  \text{if}~(\theta_{n-1/2,l},\overline{\theta}_{n-1/2,l-1})\in\mathscr{K}_{\psi_{n-1,l}}^2,\\
(\theta_{0,l},\overline{\theta}_{0,l-1},\psi_{n-1,l}+1) &\text{otherwise} 
\end{array} \right.
$$
and set $n=n+1$ and go to the start of 1..}
\end{enumerate}

\section{Mathematical Set-Up}\label{sec:math}

\subsection{Notation}

In order to continue with our analysis,  we need several notations.  On an arbitrary measurable space
$(\mathsf{E},\mathscr{E})$ for any finite measure $\mu$ on $(\mathsf{E},\mathscr{E})$ and
any $\mu-$integrable $f:\mathsf{E}\rightarrow\mathbb{R}^k$ we write $\mu(f)=\int_{\mathsf{E}}f(x)\mu(dx)$.
Note that in the context of MSA we will write (e.g.) $\pi_{\theta}(f)=\int_{\mathsf{X}}f(x)\pi_{\theta}(x)dx$.
The collection of probability measures on $(\mathsf{E},\mathscr{E})$  are denoted $\mathscr{P}(\mathsf{E})$.
On a product space $(\mathsf{E}\times\mathsf{E},\mathscr{E}\otimes\mathscr{E})$ we will write for any 
finite measure $\mu$ on $(\mathsf{E}\times\mathsf{E},\mathscr{E}\otimes\mathscr{E})$ and any 
$\mu-$integrable $f,g:\mathsf{E}^2\rightarrow\mathbb{R}^k$:
$$
\mu\left(f\otimes g\right) = \int_{\mathsf{E}^2} f(x)g(y)\mu\left(d(x,y)\right).
$$
For a Markov kernel on $M$ on $(\mathsf{E},\mathscr{E})$ we will set $M(f)(x) = \int_{\mathsf{E}}f(y)M(x,dy)$,  assuming $M(|f|)(x)<+\infty$, $|\cdot|$ is the $L_1-$norm.
In addition,  for $\mu$, $M$ and $f$ as described we write $\mu M(f) = \mu(M(f))$ assuming that 
$M(f)$ is $\mu-$integrable.  Let $W:\mathsf{E}\rightarrow[1,
\infty)$ then for any $f:\mathsf{E}\rightarrow\mathbb{R}$ we set
$$
|f|_W = \sup_{x\in\mathsf{E}}\frac{|f(x)|}{W(x)}.
$$
If $W=1$ we write $|f|_{\infty}$ instead of $|f|_1$.
For $(\mu,\xi)\in\mathscr{P}(\mathsf{E})$ we set
$$
\|\mu-\xi\|_W = \sup_{|f|\leq W}|\mu(f)-\xi(f)|.
$$
For $M_1,M_2$ Markov kernels we write
$$
\seminorm{M_1-M_2}_{W} = \sup_{x\in\mathsf{E}}\frac{\|M_1(x,\cdot)-M_2(x,\cdot)\|_W}{W(x)}.
$$
For $f:\mathsf{E}\rightarrow\mathbb{R}$,  if $|f|_W<+\infty$,  we say that $f\in\mathscr{L}_W$ and if
$|f|_{\infty}<+\infty$,  we write $f\in\mathscr{B}_b(\mathsf{E})$.

In the context of the MSA procedures discussed,  we shall assume that for $\boldsymbol{\theta},\mathbf{l}$ 
given $\check{K}_{\boldsymbol{\theta},\mathbf{l}}$ admits an invariant measure
$\check{\pi}_{\boldsymbol{\theta},\mathbf{l}}$ and due to the properties in \eqref{eq:coup1}-\eqref{eq:coup2}
we have that for any $f:\mathsf{X}\rightarrow\mathbb{R}$ that is both $\pi_{\theta,l}-$ and $\pi_{\overline{\theta},\overline{l}}-$integrable
$$
\check{\pi}_{\boldsymbol{\theta},\mathbf{l}}\left(f\otimes 1\right) = \pi_{\theta,l}(f)~\text{and}~
\check{\pi}_{\boldsymbol{\theta},\mathbf{l}}\left(1\otimes f\right) = \pi_{\overline{\theta},\overline{l}}(f).
$$
If for $l$ given,  $\overline{l}=l-1$ we write $\check{\pi}_{\boldsymbol{\theta},l}$ instead of $\check{\pi}_{\boldsymbol{\theta},\mathbf{l}}$.
If we consider the integration of any $\theta-$dependent function $J:\Theta\times\mathsf{X}\rightarrow\mathbb{R}^k$ we will write 
$$
 \pi_{\theta,l}(J_{\theta}) = \int_{\mathsf{X}}J(\theta,x) \pi_{\theta,l}(x)dx.
$$
A similar convention is used when integrating w.r.t.~$\check{K}_{\boldsymbol{\theta},\mathbf{l}}$ or
$K_{\theta,l}$.  For a function $J(\theta,x)$  we denote by
$$
\widehat{J}_{l}(\theta,x) - K_{\theta,l}(\widehat{J}_{\theta,l})(x) = J(\theta,x) - \pi_{\theta,l}(J_{\theta})
$$
where $\widehat{J}_{l}(\theta,x)$ is the solution of the Poisson equation.

\subsection{Asymptotic Variance}

We consider the case that $\Theta=\mathbb{R}$;  we have explained why this is the case earlier in the article.  \cite{fort} proves under assumptions that
\begin{equation}\label{eq:clt}
\gamma_n^{-1/2}\left(\{\theta_{n,l}-\overline{\theta}_{n,l-1}\} - 
\{\theta^{\star}_l-\theta^{\star}_{l-1}\}
\right) \rightarrow_d \mathcal{N}\left(0,\Sigma_{\boldsymbol{\theta}_l^{\star},l}\right)
\end{equation}
where $\boldsymbol{\theta}_l^{\star}=(\theta^{\star}_l,\theta^{\star}_{l-1})$ and
\begin{eqnarray*}
\Sigma_{\boldsymbol{\theta}_l^{\star},l} & = & -\left(2\frac{dh_l(\theta^{\star}_l)}{d\theta}\right)^{-1}
\check{\pi}_{\boldsymbol{\theta}_l^{\star},l}\left(
\widehat{H}_{\theta_l^{\star},l}^2\otimes 1 - 
K_{\theta^{\star}_l,l}\left(\widehat{H}_{\theta_l^{\star},l}\right)^2\otimes 1
\right) - \\
& & \left(2\frac{dh_{l-1}(\theta^{\star}_{l-1})}{d\theta}\right)^{-1}
\check{\pi}_{\boldsymbol{\theta}_l^{\star},l}\left(
1\otimes\widehat{H}_{\theta_{l-1}^{\star},l-1}^2 - 
1\otimes K_{\theta^{\star}_{l-1},l-1}\left(\widehat{H}_{\theta_{l-1}^{\star},l-1}\right)^2
\right) + \\
& &
2\left(
\frac{dh_l(\theta^{\star}_l)}{d\theta} + 
\frac{dh_{l-1}(\theta^{\star}_{l-1})}{d\theta}\right)^{-1}
\check{\pi}_{\boldsymbol{\theta}_l^{\star},l}\left(
\widehat{H}_{\theta_l^{\star},l}\otimes \widehat{H}_{\theta_{l-1}^{\star},l-1} - 
K_{\theta^{\star}_l,l}\left(\widehat{H}_{\theta_l^{\star},l}\right)
\otimes K_{\theta^{\star}_{l-1},l-1}\left(\widehat{H}_{\theta_{l-1}^{\star},l-1}\right)\right).
\end{eqnarray*}
Note that within the assumptions of \cite{fort} is that 
$$
\max\left\{
\frac{dh_l(\theta^{\star}_l)}{d\theta}, 
\frac{dh_{l-1}(\theta^{\star}_{l-1})}{d\theta}
\right\} < 0 
$$
which we assume also.   We note that the convergence in distribution in \eqref{eq:clt} is in fact conditional on $(\theta_{n,l},\theta_{n,l-1})$ converging to $(\theta^{\star}_l,\theta^{\star}_{l-1})$.

\subsection{Assumptions}

Below we will use $C$ to denote a generic finite constant that is $(\theta,l,x)\in\Theta\times\mathbb{N}_0\times\mathsf{X}$ independent.  Also $\Delta_l=2^{-l}$,  which in practice relates to the accuracy of a numerical
solver of a differential equation (see e.g.~\cite{beskos1}).
Implicit in our assumptions is the differentiability of $H_l$, $\pi_{\theta,l}$ etc,  which are not stated.

\begin{hypA}\label{ass:1}
\begin{enumerate}
\item{For any $(\theta,l)\in\Theta\times\mathbb{N}_0$ let $\nu_{\theta,l}\in\mathscr{P}(\mathsf{X})$.
There exists $(\epsilon,\mathsf{C})\in(0,1)\times\mathscr{X}$ such that for any  
$(\theta,l,x)\in\Theta\times\mathbb{N}_0\times\mathsf{X}$
$$
K_{\theta,l}(x,dx') \geq \epsilon \nu_{\theta,l}(dx').
$$
Moreover we have $\inf_{(\theta,l)\in\Theta\times\mathbb{N}_0}\nu_{\theta,l}(\mathsf{C})=C>0$.
\label{ass:11}
}
\item{
For any $(\theta,l)\in\Theta\times\mathbb{N}_0$ let $V_{\theta,l}:\mathsf{X}\rightarrow[1,\infty)$.
There exists $(\lambda,b)\in(0,1)\times(0,\infty)$ such that for any $(\theta,l,x)\in\Theta\times\mathbb{N}_0\times\mathsf{X}$
$$
K_{\theta,l}(V_{\theta,l})(x) \leq \lambda V_{\theta,l}(x) + b\mathbb{I}_{\mathsf{C}}(x)
$$
where $\mathsf{C}$ is as in \ref{ass:11}.  \label{ass:12}
}
\item{For any $\theta\in\Theta$ let $\nu_{\theta}\in\mathscr{P}(\mathsf{X})$.
There exists $(\epsilon,\mathsf{C})\in(0,1)\times\mathscr{X}$ such that for any  
$(\theta,x)\in\Theta\times\mathsf{X}$
$$
K_{\theta}(x,dx') \geq \epsilon \nu_{\theta}(dx').
$$
Moreover we have $\inf_{\theta\in\Theta\times\mathbb{N}_0}\nu_{\theta}(\mathsf{C})=C>0$.
\label{ass:13}
}
\item{
For any $\theta\in\Theta$ let $V_{\theta}:\mathsf{X}\rightarrow[1,\infty)$.
There exists $(\lambda,b)\in(0,1)\times(0,\infty)$ such that for any $(\theta,x)\in\Theta\times\mathsf{X}$
$$
K_{\theta}(V_{\theta})(x) \leq \lambda V_{\theta}(x) + b\mathbb{I}_{\mathsf{C}}(x)
$$
where $\mathsf{C}$ is as in \ref{ass:13}. \label{ass:14}
}
\item{For $V_{\theta,l}$ as in \ref{ass:12} and $V_{\theta}$ as in  \ref{ass:14} we have
$$
\max\left\{
\sup_{(\theta,l)\in\Theta\times\mathbb{N}_0}
\left\{\sup_{x\in\mathsf{X}}
\frac{V_{\theta,l}(x)}{V_{\theta}(x)} 
\right\},
\sup_{(\theta,l)\in\Theta\times\mathbb{N}}
\left\{\sup_{x\in\mathsf{X}}
\frac{V_{\theta,l-1}(x)}{V_{\theta,l}(x)} 
\right\}
\right\}
\leq C.
$$
In addition
$$
\max\left\{
\sup_{(\theta,x)\in\Theta\times\mathbb{N}\times\mathsf{X}}\frac{|H_{0}(\theta,x)|}{V_{\theta,0}(x)^{1/2}}
\sup_{(\theta,l,x)\in\Theta\times\mathbb{N}\times\mathsf{X}}\frac{|H_{l}(\theta,x)|}{V_{\theta,l-1}(x)^{1/2}},
\sup_{(\theta,l,x)\in\Theta\times\mathbb{N}_0\times\mathsf{X}}\frac{\left|\frac{\partial H_l(\theta,x)}{\partial \theta}\right|}{V_{\theta,l}(x)}
\right\} \leq C.
$$
\label{ass:15}
}
\item{For each $r\in(0,1]$,  there exists a $(\beta,C)\in(0,\infty)^2$ such that for any 
$(\theta,l)\in\Theta\times\mathbb{N}$
$$
\seminorm{K_{\theta,l}-K_{\theta}}_{V_{\theta,l}^r}
\leq C\Delta_l^{\beta}
$$
where $V_{\theta,l}$ is as in \ref{ass:12} and $\beta$ does not depend on $r$. \label{ass:16}}
\item{For each $r\in(0,1]$, there exists a $(\beta,C)\in(0,\infty)^2$,  with $\beta$ as in \ref{ass:16},  such that for any 
$(\theta,l)\in\Theta\times\mathbb{N}_0$
$$
\|\pi_{\theta,l}-\pi_{\theta}\|_{V_{\theta}^r} 
\leq C\Delta_l^{\beta}
$$
where $V_{\theta}$ is as in \ref{ass:14}. \label{ass:17}}
\item{For each $r\in(0,1]$,  there exists a $(\beta,C)\in(0,\infty)^2$,  with $\beta$ as in \ref{ass:16},  such that for any 
$(\theta,l)\in\Theta\times\mathbb{N}$
$$
\max\left\{
|\pi_{\theta}(H_{\theta,l}-H_{\theta})|, 
|K_{\theta}(H_{\theta,l}-H_{\theta,l-1})|_{V_{\theta,l-1}^r},
\left|
\pi_{\theta}\left(\frac{\partial H_{\theta,l}}{\partial \theta}-\frac{\partial H_{\theta}}{\partial \theta}\right)
\right|
\right\}
\leq C\Delta_l^{\beta}
$$
where $V_{\theta,l-1}$ is as in \ref{ass:12}. \label{ass:18}}
\item{There exists a $(\beta,C)\in(0,\infty)^2$,  with $\beta$ as in \ref{ass:16},  such that for any 
$(\theta,l)\in\Theta\times\mathbb{N}$
$$
\left|
\int_{\mathsf{X}}\left\{
\frac{\partial \pi_{\theta,l}(x)}{\partial \theta}H_l(\theta,x)-
\frac{\partial \pi_{\theta}(x)}{\partial \theta}H(\theta,x)
\right\}dx
\right|
\leq C\Delta_l^{\beta}.
$$
\label{ass:19}}
\item{There exists a $(\zeta,C)\in(1/2,1]\times(0,\infty)$ such that for any 
$(\theta,\theta')\in\Theta^2$
$$
\max\Bigg\{\left|
\int_{\mathsf{X}}\left\{
\frac{\partial \pi_{\theta}(x)}{\partial \theta}H(\theta,x) - 
\frac{\partial \pi_{\theta'}(x)}{\partial \theta'}H(\theta',x)
\right\}dx
\right|,
$$
$$
\left|\int_{\mathsf{X}}\left\{
\frac{\partial H(\theta,x)}{\partial \theta}\pi_{\theta}(x) -
\frac{\partial H(\theta',x)}{\partial \theta'}H(\theta',x)\pi_{\theta'}(x)\right\}dx\right|
\Bigg\}
\leq C|\theta-
\theta'|^{\zeta}.
$$
\label{ass:110}}
\item{There exists a $(\zeta,C)\in(1/2,1]\times(0,\infty)$,  with $\zeta$ as in \ref{ass:110},   such that for any 
$(\theta,\theta',l,x)\in\Theta^2\times\mathbb{N}_0\times\mathsf{X}$
$$
|H_l(\theta,x)-H_l(\theta',x)| \leq C|\theta-\theta'|^{\zeta}.
$$
\label{ass:111}}
\item{ For each $r\in(0,1]$, there exists a $(\zeta,C)\in(1/2,1]\times(0,\infty)$,  with $\zeta$ as in \ref{ass:110},   such that for any $(\theta,\theta',l)\in\Theta^2\times\mathbb{N}_0$
$$
\max\left\{
\|\pi_{\theta,l}-\pi_{\theta',l}\|_{V_{\theta,l}^r},
\seminorm{K_{\theta,l}-K_{\theta',l}}_{V_{\theta,l}^r}
\right\} \leq C|\theta-\theta'|^{\zeta}
$$
where $V_{\theta,l}$ is as in \ref{ass:12}.
\label{ass:113}}
\item{Let $\mathcal{D}$ be a metric on $\mathsf{X}\times\mathsf{X}$. For each $r\in(0,1]$ there exists a $C\in(0,\infty)$ such that for any $(\theta,l,x,y,f)\in\Theta\times\mathbb{N}_0\times\mathsf{X}^2\times\mathscr{B}_b(\mathsf{X})$
$$
\left|
\int_{\mathsf{X}}f(u)V_{\theta,l}(u)^r K_{\theta,l}(x,du) - 
\int_{\mathsf{X}}f(u)V_{\theta,l}(u)^r K_{\theta,l}(y,du)
\right| \leq C|f|_{\infty}\mathcal{D}(x,y)
$$
where $V_{\theta,l}$ is as in \ref{ass:12}.
\label{ass:112}}
\item{
There exists a $C\in(0,\infty)$ such that for any $(\theta,l,x,y)\in\Theta\times\mathbb{N}_0\times\mathsf{X}^2$
$$
|H_{l}(\theta,x)-H_{l}(\theta,y)| \leq C\mathcal{D}(x,y)
$$
where $\mathcal{D}$ is as in \ref{ass:112}.
\label{ass:114}}
\end{enumerate}
\end{hypA}

\subsection{Discussion of Assumptions}

We now discuss the assumptions that we have made.  (A\ref{ass:1}) \ref{ass:11}-\ref{ass:14}
are fairly standard in the study of Markov chain algorithms,  such as \cite{andr_moulines,non_linear}.
In the case that $K_{\theta,l}$ is a random walk Metropolis-Hastings (M-H) kernel,  the Lyapunov function $V_{\theta,l}(x)$ is,  up-to a constant,  $\pi_{\theta,l}(x)^{-\phi}$ for some $\phi\in(0,1)$.
(A\ref{ass:1}) \ref{ass:15} can be verified for random walk M-H,  for instance when the consective probability density functions become lighter-tailed as the accuracy of the numerical solver grows.  In previous experience (e.g.~\cite{beskos1}) we have found this to be the case,  at least in practice.
(A\ref{ass:1}) \ref{ass:16}-\ref{ass:17} relates to the convergence as $l$ grows of the kernels and invariant measures.  In the context of (A\ref{ass:1}) \ref{ass:17} bounds of these types (in different norms) have been considered in \cite{beskos1} and we expect that (A\ref{ass:1}) \ref{ass:17} could be used to verify (A\ref{ass:1}) \ref{ass:16}.  (A\ref{ass:1}) \ref{ass:18}-\ref{ass:19}  relate to rates of convergence of appropriate expectations
of $H_{\theta,l}$, its $\theta$ derivatives and quantities associated to the derivatives of $\pi_{\theta}$.
We expect them to be verifiable for suitably regular models,  although this is left for future work.
(A\ref{ass:1}) \ref{ass:110}-\ref{ass:113}  are continuity in $\theta$ assumptions,  that in related guises
have been investigated (in weaker forms) in \cite{andr_moulines,fort} and more-or-less are often adopted in the MSA literature.  Finally (A\ref{ass:1}) \ref{ass:112}-\ref{ass:114} are associated to an appropriate continuity of the Markov kernel and $H_{\theta,l}$.  (A\ref{ass:1}) \ref{ass:112} has been investigated (stronger assumption) in \cite{disc_models} and (A\ref{ass:1}) \ref{ass:114} is a Lipschitz-type condition on $H_{\theta,l}$

\section{Main Result}\label{sec:main}

\subsection{Statement}

\begin{theorem}\label{theo:main}
Assume (A\ref{ass:1}) \ref{ass:11}-\ref{ass:114}.  Then there exists a $C\in(0,\infty)$ such that
for any $(l,\theta^{\star}_l,\theta^{\star}_{l-1})\in\mathbb{N}\times\Theta^2$  
$$
\Sigma_{\boldsymbol{\theta}_l^{\star},l} \leq 
$$
$$
C
\left|\left( \frac{dh_l(\theta^{\star}_l)}{d\theta}+\frac{dh_{l-1}(\theta^{\star}_{l-1})}{d\theta}
\right)^{-1}\right|
\max\{\pi_{\theta^{\star}_l,l}(V_{\theta^{\star}_l,l}),\pi_{\theta^{\star}_{l-1},l-1}(V_{\theta^{\star}_{l-1},l-1})\}
\left(
\Delta_l^{\beta} + |\theta_l^{\star}-\theta_{l-1}^{\star}|^{\zeta} + \check{\pi}_{\boldsymbol{\theta}_l^{\star},l}\left(
\mathcal{D}^2
\right)^{1/2}
\right)
$$
where $\beta\in(0,\infty)$ is as (A\ref{ass:1}) \ref{ass:17}, $\zeta\in(1/2,1]$ is as (A\ref{ass:1}) \ref{ass:110} and $\mathcal{D}$ is as (A\ref{ass:1}) \ref{ass:112}.
\end{theorem}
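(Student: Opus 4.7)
The plan is to exploit the coupling structure: $\Sigma_{\boldsymbol{\theta}_l^{\star},l}$ should be small because the two MSA recursions at levels $l$ and $l-1$ are run jointly, so the three terms in the CLT expression almost cancel. The first task, carried out in Appendix \ref{app:tech}, is to establish quantitative control on the Poisson solution $\widehat{H}_{\theta,l}$ and on $K_{\theta,l}\widehat{H}_{\theta,l}$. Writing $\widehat{H}_{\theta,l}(x) = \sum_{n\geq 0}\{K_{\theta,l}^n H_l(\theta,\cdot)(x) - \pi_{\theta,l}(H_l(\theta,\cdot))\}$, the uniform drift and minorization in (A\ref{ass:1}) \ref{ass:11}-\ref{ass:14} together with the growth assumption (A\ref{ass:1}) \ref{ass:15} yield absolute convergence and $|\widehat{H}_{\theta,l}|_{V_{\theta,l}^{1/2}} \leq C$. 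Standard perturbation arguments applied to the Poisson series then convert (A\ref{ass:1}) \ref{ass:16}-\ref{ass:18} into a $\Delta_l^{\beta}$-rate $|\widehat{H}_{\theta,l} - \widehat{H}_{\theta,l-1}|_{V^{1/2}} \leq C\Delta_l^{\beta}$, convert (A\ref{ass:1}) \ref{ass:111}-\ref{ass:113} into a $\zeta$-H\"older bound $|\widehat{H}_{\theta,l} - \widehat{H}_{\theta',l}|_{V^{1/2}} \leq C|\theta-\theta'|^{\zeta}$, and convert (A\ref{ass:1}) \ref{ass:112} and (A\ref{ass:1}) \ref{ass:114} into a spatial Lipschitz bound $|\widehat{H}_{\theta,l}(x) - \widehat{H}_{\theta,l}(\bar x)| \leq C\mathcal{D}(x,\bar x)$.

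The algebraic core of the argument is the following identity: with $u = dh_l(\theta_l^{\star})/d\theta$, $v = dh_{l-1}(\theta_{l-1}^{\star})/d\theta$ (both negative), and the three $\check{\pi}$-quantities in the theorem statement abbreviated $A_l$, $A_{l-1}$, $B$,
$$
\Sigma_{\boldsymbol{\theta}_l^{\star},l} = -\frac{1}{u+v}\big[A_l + A_{l-1} - 2B\big] + \frac{u-v}{2u(u+v)}\,A_l - \frac{u-v}{2v(u+v)}\,A_{l-1}.
$$
The bracket telescopes to
$$
\check{\pi}_{\boldsymbol{\theta}_l^{\star},l}\big(\big[\widehat{H}_{\theta_l^{\star},l}(x) - \widehat{H}_{\theta_{l-1}^{\star},l-1}(\bar x)\big]^2\big) - \check{\pi}_{\boldsymbol{\theta}_l^{\star},l}\big(\big[K_{\theta_l^{\star},l}\widehat{H}_{\theta_l^{\star},l}(x) - K_{\theta_{l-1}^{\star},l-1}\widehat{H}_{\theta_{l-1}^{\star},l-1}(\bar x)\big]^2\big).
$$
Routing the inner difference through the intermediates $\widehat{H}_{\theta_l^{\star},l}(\bar x)$ and $\widehat{H}_{\theta_l^{\star},l-1}(\bar x)$ yields three increments bounded respectively by $C\mathcal{D}(x,\bar x)$, $C\Delta_l^{\beta}V^{1/2}(\bar x)$, and $C|\theta_l^{\star}-\theta_{l-1}^{\star}|^{\zeta}V^{1/2}(\bar x)$. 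Combining $(a-b)^2 \leq (|a|+|b|)|a-b|$ with $|\widehat{H}| \leq CV^{1/2}$, then integrating against $\check{\pi}$ (with Cauchy-Schwarz on the $\mathcal{D}$-contribution and using $\pi(V) \geq 1$), yields
$$
\check{\pi}_{\boldsymbol{\theta}_l^{\star},l}\big(\big[\widehat{H}_{\theta_l^{\star},l}(x) - \widehat{H}_{\theta_{l-1}^{\star},l-1}(\bar x)\big]^2\big) \leq C\max\{\pi(V_{\theta_l^{\star},l}), \pi(V_{\theta_{l-1}^{\star},l-1})\}\big(\Delta_l^{\beta} + |\theta_l^{\star}-\theta_{l-1}^{\star}|^{\zeta} + \check{\pi}_{\boldsymbol{\theta}_l^{\star},l}(\mathcal{D}^2)^{1/2}\big),
$$
with the identical estimate on the $K\widehat{H}$-difference.

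For the correction terms, Step 1 gives $|A_l|, |A_{l-1}| \leq C\pi(V)$. The mismatch $|u - v|$ splits as $|dh_l(\theta_l^{\star})/d\theta - dh_l(\theta_{l-1}^{\star})/d\theta| + |dh_l(\theta_{l-1}^{\star})/d\theta - dh_{l-1}(\theta_{l-1}^{\star})/d\theta|$; the first summand is $O(|\theta_l^{\star}-\theta_{l-1}^{\star}|^{\zeta})$ by (A\ref{ass:1}) \ref{ass:110} and the second is $O(\Delta_l^{\beta})$ by differentiating $h_l - h_{l-1}$ under the integral and applying (A\ref{ass:1}) \ref{ass:18}-\ref{ass:19}. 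Since $u, v$ are uniformly bounded away from zero (implicit in the setting of \cite{fort}), $|u-v|/|u(u+v)|$ and $|u-v|/|v(u+v)|$ are both $O((\Delta_l^{\beta}+|\theta_l^{\star}-\theta_{l-1}^{\star}|^{\zeta})/|u+v|)$, so the correction inherits the target bound. The main obstacle is the Appendix-level work in Step 1: the spatial Lipschitz bound on $\widehat{H}_{\theta,l}$ requires iterating $K_{\theta,l}^n$ while preserving a summable Lipschitz constant, which forces a careful interplay of (A\ref{ass:1}) \ref{ass:112} and (A\ref{ass:1}) \ref{ass:114} with the geometric ergodicity from (A\ref{ass:1}) \ref{ass:11}-\ref{ass:14} so that $V^r$-growth under iteration does not destroy summability; similarly, converting the one-step kernel bound (A\ref{ass:1}) \ref{ass:16} into a Poisson-level bound requires routing through the exact kernel $K_{\theta}$ and pairing $K_{\theta,l}$, $K_{\theta,l-1}$ to $K_\theta$ separately rather than directly.
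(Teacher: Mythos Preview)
Your proposal is correct and follows essentially the same route as the paper: your algebraic identity is exactly the paper's $T_3/T_4$ split (with $T_3=\tfrac{u-v}{2u(u+v)}A_l$), your three-step routing of $\widehat{H}_{\theta_l^\star,l}(x)-\widehat{H}_{\theta_{l-1}^\star,l-1}(\bar x)$ into spatial, level, and $\theta$ increments is the decomposition of Lemma~\ref{lem:av}, and your Appendix-level Poisson estimates are Lemmata~\ref{lem:pois_cont}--\ref{lem:pois_cont_theta}. The only cosmetic difference is that you package the leading term as the full square $\check{\pi}\big((\widehat H_l-\widehat H_{l-1})^2\big)$, whereas the paper bounds $|A_l-B|$ and $|A_{l-1}-B|$ separately; the underlying estimates are identical. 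One small correction: you invoke (A\ref{ass:1})~\ref{ass:110} to control $|dh_l(\theta_l^\star)/d\theta - dh_l(\theta_{l-1}^\star)/d\theta|$, but that hypothesis is stated only for the exact $h$, not for $h_l$; the paper (Lemma~\ref{lem:2}, Corollary~\ref{cor:1}) routes instead through $dh(\theta')/d\theta'$, trading an extra harmless $\Delta_l^{\beta}$ from (A\ref{ass:1})~\ref{ass:17}--\ref{ass:19} for staying strictly within the stated assumptions.
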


\begin{proof}
We have that
$$
\Sigma_{\boldsymbol{\theta}_l^{\star},l} = T_1 + T_2
$$
where
\begin{eqnarray*}
 T_1 & = & -\left(2\frac{dh_l(\theta^{\star}_l)}{d\theta}\right)^{-1}
\check{\pi}_{\boldsymbol{\theta}_l^{\star},l}\left(
\widehat{H}_{\theta_l^{\star},l}^2\otimes 1 - 
K_{\theta^{\star}_l,l}\left(\widehat{H}_{\theta_l^{\star},l}\right)^2\otimes 1
\right) + \\
& & 
\left(
\frac{dh_l(\theta^{\star}_l)}{d\theta} + 
\frac{dh_{l-1}(\theta^{\star}_{l-1})}{d\theta}\right)^{-1}
\check{\pi}_{\boldsymbol{\theta}_l^{\star},l}\left(
\widehat{H}_{\theta_l^{\star},l}\otimes \widehat{H}_{\theta_{l-1}^{\star},l-1} - 
K_{\theta^{\star}_l,l}\left(\widehat{H}_{\theta_l^{\star},l}\right)
\otimes K_{\theta^{\star}_{l-1},l-1}\left(\widehat{H}_{\theta_{l-1}^{\star},l-1}\right)\right)\\
T_2 & = & -\left(2\frac{dh_{l-1}(\theta^{\star}_{l-1})}{d\theta}\right)^{-1}
\check{\pi}_{\boldsymbol{\theta}_l^{\star},l}\left(
1\otimes \widehat{H}_{\theta_{l-1}^{\star},l-1}^2 - 
1\otimes K_{\theta^{\star}_{l-1},l-1}\left(\widehat{H}_{\theta_{l-1}^{\star},l-1}\right)^2
\right) +\\
& & 
\left(
\frac{dh_l(\theta^{\star}_l)}{d\theta} + 
\frac{dh_{l-1}(\theta^{\star}_{l-1})}{d\theta}\right)^{-1}
\check{\pi}_{\boldsymbol{\theta}_l^{\star},l}\left(
\widehat{H}_{\theta_l^{\star},l}\otimes \widehat{H}_{\theta_{l-1}^{\star},l-1} - 
K_{\theta^{\star}_l,l}\left(\widehat{H}_{\theta_l^{\star},l}\right)
\otimes K_{\theta^{\star}_{l-1},l-1}\left(\widehat{H}_{\theta_{l-1}^{\star},l-1}\right)\right).
\end{eqnarray*}
$T_1$ and $T_2$ can be dealt with similarly so we consider only $T_1$.

We have
$$
T_1=T_3+T_4
$$
where
\begin{eqnarray*}
T_3 & = & \left\{
\left(
\frac{dh_l(\theta^{\star}_l)}{d\theta} + 
\frac{dh_{l-1}(\theta^{\star}_{l-1})}{d\theta}\right)^{-1} - 
\left(2\frac{dh_l(\theta^{\star}_l)}{d\theta}\right)^{-1}
\right\}\check{\pi}_{\boldsymbol{\theta}_l^{\star},l}\left(
\widehat{H}_{\theta_l^{\star},l}^2\otimes 1 - 
K_{\theta^{\star}_l,l}\left(\widehat{H}_{\theta_l^{\star},l}\right)^2\otimes 1
\right)\\
T_4 & = &
\left(
\frac{dh_l(\theta^{\star}_l)}{d\theta} + 
\frac{dh_{l-1}(\theta^{\star}_{l-1})}{d\theta}\right)^{-1}
\Bigg\{-
\check{\pi}_{\boldsymbol{\theta}_l^{\star},l}\left(
\widehat{H}_{\theta_l^{\star},l}^2\otimes 1 - 
K_{\theta^{\star}_l,l}\left(\widehat{H}_{\theta_l^{\star},l}\right)^2\otimes 1
\right) + \\
& & 
\check{\pi}_{\boldsymbol{\theta}_l^{\star},l}\left(
\widehat{H}_{\theta_l^{\star},l}\otimes \widehat{H}_{\theta_{l-1}^{\star},l-1} - 
K_{\theta^{\star}_l,l}\left(\widehat{H}_{\theta_l^{\star},l}\right)
\otimes K_{\theta^{\star}_{l-1},l-1}\left(\widehat{H}_{\theta_{l-1}^{\star},l-1}\right)\right)
\Bigg\}.
\end{eqnarray*}
$T_3$ is then easily handled using \eqref{eq:lem41} and Corollary \ref{cor:1}, that is:
$$
|T_3| \leq \left|\left(\frac{dh_l(\theta^{\star}_l)}{d\theta}+\frac{dh_{l-1}(\theta^{\star}_{l-1})}{d\theta}
\right)^{-1}\right|\pi_{\theta^{\star}_l,l}(V_{\theta^{\star}_l,l})
\left(
\Delta_l^{\beta} + |\theta_l^{\star}-\theta_{l-1}^{\star}|^{\zeta}\right).
$$
For $T_4$ one can use \eqref{eq:lem41} and Lemmata \ref{lem:av}, \ref{lem:av1}  to give
$$
|T_4| \leq  \left|\left(\frac{dh_l(\theta^{\star}_l)}{d\theta}+\frac{dh_{l-1}(\theta^{\star}_{l-1})}{d\theta}
\right)^{-1}\right|\pi_{\theta^{\star}_l,l}(V_{\theta^{\star}_l,l})
\left(
\Delta_l^{\beta} + |\theta_l^{\star}-\theta_{l-1}^{\star}|^{\zeta} + \check{\pi}_{\boldsymbol{\theta}_l^{\star},l}\left(
\mathcal{D}^2
\right)^{1/2}
\right)
$$
and the proof is thus concluded.
\end{proof}

\subsection{Implication of Result}

In the following, we suppose that $\Theta$ is compact and that the conditions of \cite[Theorem 5.5]{andr_moulines1} and \cite[(A1)-(A4)]{fort} hold uniformly in each $l$;  this is simply to minimize the technicalities in the subsqeuent discussion.  In this case we have that
$$
\lim_{n\rightarrow\infty} \mathbb{E}\left[\left(
\gamma_n^{-1/2}\left(\{\theta_{n,l}-\overline{\theta}_{n,l-1}\} - 
\{\theta^{\star}_l-\theta^{\star}_{l-1}\}
\right)
\right)^2\right] = \Sigma_{\boldsymbol{\theta}_l^{\star},l}
$$
so that for sufficiently large $n$ one has that
$$
\mathbb{E}\left[\left(\left(\{\theta_{n,l}-\overline{\theta}_{n,l-1}\} - 
\{\theta^{\star}_l-\theta^{\star}_{l-1}\}
\right)
\right)^2\right] \leq \gamma_n\Sigma_{\boldsymbol{\theta}_l^{\star},l}
$$
at least up-to an arbitrarily small additive constant.  Therefore to choose 
$\gamma_n$ to achieve a suitably small error (i.e.~L.H.S.~of the above equation) and hence mean square error,   one can seek to bound 
$\Sigma_{\boldsymbol{\theta}_l^{\star},l}$ which was achieved in Theorem \ref{theo:main}.  From herein we allow $\gamma_n$ to depend on $l$ and write it as $\gamma_{n,l}$.

In the upper-bound in Theorem \ref{theo:main},  under our assumptions the term:
$$
\left|\left( \frac{dh_l(\theta^{\star}_l)}{d\theta}+\frac{dh_{l-1}(\theta^{\star}_{l-1})}{d\theta}
\right)^{-1}\right|
\max\{\pi_{\theta^{\star}_l,l}(V_{\theta^{\star}_l,l}),\pi_{\theta^{\star}_{l-1},l-1}(V_{\theta^{\star}_{l-1},l-1})\}
$$
will, as $l\rightarrow\infty$,  converge to a constant,  so to understand the use of our bound it is enough to consider
soley the terms
$$
\Delta_l^{\beta} + |\theta_l^{\star}-\theta_{l-1}^{\star}|^{\zeta} + \check{\pi}_{\boldsymbol{\theta}_l^{\star},l}\left(
\mathcal{D}^2
\right)^{1/2}.
$$
The term $|\theta_l^{\star}-\theta_{l-1}^{\star}|^{\zeta}$ relates to the bias ($|\theta_l^{\star}-\theta^{\star}|$) and one often sees in applications (or assumes) that this $|\theta_l^{\star}-\theta_{l-1}^{\star}|^{\zeta}=\mathcal{O}(\Delta_l^{\alpha\zeta})$ for some $\alpha>0$.   Since $\check{\pi}_{\boldsymbol{\theta}_l^{\star},l}\left(
\mathcal{D}^2\right)=\check{\pi}_{\boldsymbol{\theta}_l^{\star},l}\check{K}_{\boldsymbol{\theta}_l^{\star},l}\left(
\mathcal{D}^2\right)$ and the latter has been investigated in the literature (numerically at least) e.g.~\cite{disc_models} this appears to be $\mathcal{O}(\Delta_l^{2\beta})$,  at least for some examples. 
All these results imply that 
$$
\Sigma_{\boldsymbol{\theta}_l^{\star},l} = \mathcal{O}(\Delta_l^{\min\{\alpha\zeta,\beta\}}).
$$
As is often the case in practice (e.g.~\cite{disc_models}),  the cost of computing $\{\theta_{n,l}-\overline{\theta}_{n,l-1}\}$ is 
$$
\mathcal{O}\left(\Delta_l^{-\kappa}\gamma_n^{-1}\right)
$$
for some $\kappa>0$.  The bias is supposed as $\mathcal{O}(\Delta_l^{\alpha})$.  
Let $\epsilon>0$ be a level of precision that is required.
Using standard analysis for muiltilevel Monte Carlo (see e.g.~\cite{beskos1} and the references therein) one can choose
$\gamma_{n,l}=1/n_l$,  with
$n_l = \mathcal{O}(\epsilon^{-2}\Delta_l^{(\min\{\alpha\zeta,\beta\}+\kappa)/2})$,  $L=\mathcal{O}(|\log(\epsilon)|)$,  where we suppose that $\min\{\alpha\zeta,\beta\}>\kappa$.  The mean square error is then $\mathcal{O}(\epsilon^{2})$ with a cost $\mathcal{O}(\epsilon^{-2})$.  If $\min\{\alpha\zeta,\beta\}=\kappa$,  which can
happen then one has to adjust $n_l$ and the associated cost is $\mathcal{O}(\epsilon^{-2}\log(\epsilon)^2)$.
The main point however,  is that our main theorem provides a means to choose the number of steps in the MSE procedure,  which was only possible previously using mathematical conjectures as in \cite{ub_inv}.

\subsubsection*{Acknowledgement}

AJ was supported by CUHK-SZ UDF01003537.

\appendix

\section{Technical Results}\label{app:tech}

We remark that if $\varphi_{\theta,l}\in\mathscr{L}_{V_{\theta,l}^r}$ for some $r\in(0,1]$ then
under (A\ref{ass:1}) \ref{ass:11}-\ref{ass:12}
\begin{equation}\label{eq:pois_eq}
\widehat{\varphi}_{\theta,l}(x) = \sum_{n=0}^{\infty} \left\{
[K_{\theta,l}^n-\pi_{\theta,l}](\varphi_{\theta,l})(x)\right\}
\end{equation}
is a well defined solution of the Poisson equation; see for instance \cite{non_linear} and the references therein.
Let $\mathcal{D}$ be a metric on $\mathsf{X}\times\mathsf{X}$ and $\varphi:\mathsf{X}\rightarrow\mathbb{R}$
then we say that $\varphi\in\textrm{Lip}_{\mathcal{D}}(\mathsf{X})$ if for every $(x,y)\in\mathsf{X}\times\mathsf{X}$
$$
|\varphi(x)-\varphi(y)| \leq |\varphi|_{\textrm{Lip}}\mathcal{D}(x,y).
$$

\begin{lem}\label{lem:pois_cont}
Assume (A\ref{ass:1}) \ref{ass:11}-\ref{ass:12}, \ref{ass:112}.  Then for each $r\in(0,1]$ there exists a $C\in(0,\infty)$
such that for any $(\theta,\theta',l,\varphi_{\theta,l},x,y)\in\Theta^2\times\mathbb{N}_0\times\mathscr{L}_{V_{\theta,l}^r}\cap \text{\emph{Lip}}_{\mathcal{D}}(\mathsf{X})\times\mathsf{X}^2$
$$
\max\left\{|\widehat{\varphi}_{\theta,l}(x)-\widehat{\varphi}_{\theta,l}(y)|,
|K_{\theta,l}(\widehat{\varphi}_{\theta,l})(x)-K_{\theta,l}(\widehat{\varphi}_{\theta,l})(y)|
\right\}
\leq C(1+|\varphi_{\theta,l}|_{\text{\emph{Lip}}})\mathcal{D}(x,y).
$$
where $\mathcal{D}(x,y)$ is as (A\ref{ass:1}) \ref{ass:112}.
\end{lem}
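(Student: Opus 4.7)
The plan is to exploit the explicit series representation \eqref{eq:pois_eq} of the Poisson solution together with the kernel-continuity assumption (A\ref{ass:1})~\ref{ass:112} and the geometric ergodicity that follows from the minorization--drift pair (A\ref{ass:1})~\ref{ass:11}--\ref{ass:12}. First, using the uniform minorization on $\mathsf{C}$ and the Lyapunov drift (together with $\inf_{\theta,l}\nu_{\theta,l}(\mathsf{C})>0$), one obtains by a standard coupling/small-set argument (e.g.\ \cite{non_linear}) a $(\rho,C)\in(0,1)\times(0,\infty)$, uniform in $(\theta,l)$, such that for any $r\in(0,1]$ and any $\psi_{\theta,l}\in\mathscr{L}_{V_{\theta,l}^r}$,
$$
|K_{\theta,l}^n(\psi_{\theta,l})(x)-\pi_{\theta,l}(\psi_{\theta,l})| \leq C\rho^{n}|\psi_{\theta,l}|_{V_{\theta,l}^r}V_{\theta,l}(x)^{r},\qquad n\geq 0.
$$
This in particular guarantees convergence of the series defining $\widehat{\varphi}_{\theta,l}$ and bounds $|\widehat{\varphi}_{\theta,l}|_{V_{\theta,l}^r}\leq C|\varphi_{\theta,l}|_{V_{\theta,l}^r}$.

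Next, I split the telescoping sum in \eqref{eq:pois_eq} into the $n=0$ term and the tail $n\geq 1$. For $n=0$, the contribution to $\widehat{\varphi}_{\theta,l}(x)-\widehat{\varphi}_{\theta,l}(y)$ is simply $\varphi_{\theta,l}(x)-\varphi_{\theta,l}(y)$, which by $\varphi_{\theta,l}\in\text{Lip}_{\mathcal{D}}(\mathsf{X})$ is bounded by $|\varphi_{\theta,l}|_{\text{Lip}}\mathcal{D}(x,y)$. For $n\geq 1$, write
$$
K_{\theta,l}^n(\varphi_{\theta,l})(x)-K_{\theta,l}^n(\varphi_{\theta,l})(y)=\int_{\mathsf{X}} g_{n-1}(u)\,V_{\theta,l}(u)^{r}\,\big[K_{\theta,l}(x,du)-K_{\theta,l}(y,du)\big],
$$
where $g_{n-1}(u):=\big\{K_{\theta,l}^{n-1}(\varphi_{\theta,l})(u)-\pi_{\theta,l}(\varphi_{\theta,l})\big\}/V_{\theta,l}(u)^{r}$ is bounded, by the geometric-ergodicity bound above, by $C\rho^{n-1}|\varphi_{\theta,l}|_{V_{\theta,l}^r}$ in $L^\infty$-norm. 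Applying (A\ref{ass:1})~\ref{ass:112} with $f=g_{n-1}$ gives
$$
|K_{\theta,l}^n(\varphi_{\theta,l})(x)-K_{\theta,l}^n(\varphi_{\theta,l})(y)|\leq C\rho^{n-1}|\varphi_{\theta,l}|_{V_{\theta,l}^r}\mathcal{D}(x,y),
$$
and summing the geometric series yields the stated Lipschitz bound for $\widehat{\varphi}_{\theta,l}$, absorbing $|\varphi_{\theta,l}|_{V_{\theta,l}^r}$ into the constant $C$.

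Finally, for the second quantity I avoid redoing the estimate by invoking the Poisson equation itself: since $\widehat{\varphi}_{\theta,l}(x)-K_{\theta,l}(\widehat{\varphi}_{\theta,l})(x)=\varphi_{\theta,l}(x)-\pi_{\theta,l}(\varphi_{\theta,l})$, one has
$$
K_{\theta,l}(\widehat{\varphi}_{\theta,l})(x)-K_{\theta,l}(\widehat{\varphi}_{\theta,l})(y)=\big[\widehat{\varphi}_{\theta,l}(x)-\widehat{\varphi}_{\theta,l}(y)\big]-\big[\varphi_{\theta,l}(x)-\varphi_{\theta,l}(y)\big],
$$
so the bound just established for $\widehat{\varphi}_{\theta,l}$ combined with the Lipschitz property of $\varphi_{\theta,l}$ yields the same $C(1+|\varphi_{\theta,l}|_{\text{Lip}})\mathcal{D}(x,y)$ bound. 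The principal obstacle is the first step: extracting $(\rho,C)$ uniformly in $(\theta,l)$ from (A\ref{ass:1})~\ref{ass:11}--\ref{ass:12}, as this is what permits interchanging the Lipschitz-in-$x$ bound (which loses an $L^\infty$-norm) with the $V^r$-weighted growth of $K_{\theta,l}^{n-1}\varphi_{\theta,l}$; once this uniform geometric contraction is in hand, the rest is mechanical.
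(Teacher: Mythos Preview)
Your argument is correct and follows essentially the same route as the paper's proof: both split off the $n=0$ term (handled by the Lipschitz property of $\varphi_{\theta,l}$), write the $n\geq 1$ summands as a one-step kernel acting on $[K_{\theta,l}^{n-1}-\pi_{\theta,l}](\varphi_{\theta,l})$, invoke (A\ref{ass:1})~\ref{ass:112} with $f$ equal to this normalized function, and sum the resulting geometric series coming from the uniform $V^r$-ergodicity implied by (A\ref{ass:1})~\ref{ass:11}--\ref{ass:12}. The only point of departure is the second quantity: the paper simply remarks that $|K_{\theta,l}(\widehat{\varphi}_{\theta,l})(x)-K_{\theta,l}(\widehat{\varphi}_{\theta,l})(y)|$ ``can be proved in almost the same way'' (i.e.\ redo the series argument starting from $n\geq 1$), whereas you deduce it directly from the first bound via the Poisson identity $K_{\theta,l}(\widehat{\varphi}_{\theta,l})=\widehat{\varphi}_{\theta,l}-\varphi_{\theta,l}+\pi_{\theta,l}(\varphi_{\theta,l})$, which is a clean shortcut. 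Note that, as in the paper's own proof, the sum over $n\geq 1$ produces a factor $|\varphi_{\theta,l}|_{V_{\theta,l}^r}$ rather than $|\varphi_{\theta,l}|_{\mathrm{Lip}}$; this is harmless in the lemma's applications (where $\varphi_{\theta,l}=H_{\theta,l}$ has uniformly bounded $V^{1/2}$-norm by (A\ref{ass:1})~\ref{ass:15}), but strictly speaking your phrase ``absorbing $|\varphi_{\theta,l}|_{V_{\theta,l}^r}$ into the constant $C$'' should be read with that caveat.
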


\begin{proof}
We give the proof for $|\widehat{\varphi}_{\theta,l}(x)-\widehat{\varphi}_{\theta,l}(y)|$ only
as the other bound can be proved in almost the same way.
We have that 
$$
\widehat{\varphi}_{\theta,l}(x)-\widehat{\varphi}_{\theta,l}(y) = 
\sum_{n=0}^{\infty} \left\{
[K_{\theta,l}^n-\pi_{\theta,l}](\varphi_{\theta,l})(x) - 
[K_{\theta,l}^n-\pi_{\theta,l}](\varphi_{\theta,l})(y)
\right\}.
$$
The summand,  call it $S_n$,  is now considered.  As the case $n=0$ in the above is straightforward
we consider $n\in\mathbb{N}$ and note that $S_n$
can be written as
$$
S_n = \int_{\mathsf{X}} [K_{\theta,l}^{n-1}-\pi_{\theta,l}](\varphi_{\theta,l})(u)K_{\theta,l}(x,du) -
\int_{\mathsf{X}} [K_{\theta,l}^{n-1}-\pi_{\theta,l}](\varphi_{\theta,l})(u)K_{\theta,l}(y,du).
$$
As
$$
[K_{\theta,l}^{n-1}-\pi_{\theta,l}](\varphi_{\theta,l})(u) = 
$$
$$
V_{\theta,l}(u)^r
|\varphi_{\theta,l}|_{V_{\theta,l}^r}|[K_{\theta,l}^{n-1}-\pi_{\theta,l}](\varphi_{\theta,l}/|\varphi_{\theta,l}|_{V_{\theta,l}^r})|_{V_{\theta,l}^r}
\frac{[K_{\theta,l}^{n-1}-\pi_{\theta,l}](\varphi_{\theta,l}/|\varphi_{\theta,l}|_{V_{\theta,l}^r})(u)}
{V_{\theta,l}(u)^r|\varphi_{\theta,l}|_{V_{\theta,l}^r}|[K_{\theta,l}^{n-1}-\pi_{\theta,l}](\varphi_{\theta,l}/|\varphi_{\theta,l}|_{V_{\theta,l}^r})|_{V_{\theta,l}^r}}
$$
by (A\ref{ass:1}) \ref{ass:112} we have that
$$
|S_n| \leq C|\varphi_{\theta,l}|_{V_{\theta,l}^r}|[K_{\theta,l}^{n-1}-\pi_{\theta,l}](\varphi_{\theta,l}/|\varphi_{\theta,l}|_{V_{\theta,l}^r})|_{V_{\theta,l}^r}
\mathcal{D}(x,y).
$$
By (A\ref{ass:1}) \ref{ass:11}-\ref{ass:12}~it follows that for any $n\in\mathbb{N}$,  $|\varphi_{\theta,l}|\leq V_{\theta,l}^r$
\begin{equation}
\sup_{(\theta,l)\in\Theta\times\mathbb{N}_0}|\{K_{\theta,l}^n-\pi_{\theta,l}\}(\varphi_{\theta,l})|_{V_{\theta,l}^r}
\leq C\rho^n\label{eq:lem12}
\end{equation}
with $\rho\in(0,1)$.    So that 
$$
|S_n| \leq C|\varphi_{\theta,l}|_{V_{\theta,l}^r}\rho^{n-1}\mathcal{D}(x,y)
$$
and from here,  the proof is clear.
\end{proof}

\begin{lem}\label{lem:pois_perturb}
Assume (A\ref{ass:1}) \ref{ass:11}-\ref{ass:18}. 
Then for each $r\in(0,1]$, there exists a $C\in(0,\infty)$ such that
for any $(\theta,l,\varphi_{\theta,l},\varphi_{\theta,l-1})\in\Theta\times\mathbb{N}\times\mathscr{L}_{V_{\theta,l-1}^r}^2$
$$
\max\left\{|\widehat{\varphi}_{\theta,l}-\widehat{\varphi}_{\theta,l-1}|_{V_{\theta,l}^r}, 
|K_{\theta,l}(\widehat{\varphi}_{\theta,l})-K_{\theta,l-1}(\widehat{\varphi}_{\theta,l-1})|_{V_{\theta,l}^r} 
\right\}
\leq 
$$
$$
C
\left(
\{|\varphi_{\theta,l-1}|_{V_{\theta,l}^r}
+ |\varphi_{\theta,l}-\varphi_{\theta,l-1}|_{V_{\theta,l-1}^r}
\}
\Delta_l^{\beta} +
|
K_{\theta}\left(
\varphi_{\theta,l}-\varphi_{\theta,l-1}\right)
|_{V_{\theta,l-1}^r}
\right).
$$
\end{lem}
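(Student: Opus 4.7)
The plan is to split the difference of Poisson solutions using the Neumann series \eqref{eq:pois_eq}:
\begin{equation*}
\widehat{\varphi}_{\theta,l} - \widehat{\varphi}_{\theta,l-1} = S_A + S_B,
\end{equation*}
where, writing $g := \varphi_{\theta,l} - \varphi_{\theta,l-1}$,
\begin{equation*}
S_A = \sum_{n\geq 0}(K_{\theta,l}^n - \pi_{\theta,l})(g), \qquad S_B = \sum_{n\geq 0}\bigl\{(K_{\theta,l}^n - \pi_{\theta,l}) - (K_{\theta,l-1}^n - \pi_{\theta,l-1})\bigr\}(\varphi_{\theta,l-1}).
\end{equation*}
The two summands isolate a ``change of source'' and a ``change of kernel'' contribution. (A\ref{ass:1}) \ref{ass:11}--\ref{ass:12} give uniform geometric ergodicity of $K_{\theta,l}$ with a common rate $\rho \in (0,1)$, so these series converge and the standard estimate $|\widehat{f}_{\theta,l}|_{V_{\theta,l}^r} \leq C|f|_{V_{\theta,l}^r}$ will be used throughout.

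For $S_B$ I would use the telescoping identity $K_{\theta,l}^n - K_{\theta,l-1}^n = \sum_{k=0}^{n-1} K_{\theta,l}^{n-1-k}(K_{\theta,l} - K_{\theta,l-1})K_{\theta,l-1}^k$, and handle $\pi_{\theta,l} - \pi_{\theta,l-1}$ analogously. The key inputs are $\seminorm{K_{\theta,l} - K_{\theta,l-1}}_{V_{\theta,l}^r} \leq C\Delta_l^\beta$ and $\|\pi_{\theta,l} - \pi_{\theta,l-1}\|_{V_{\theta,l}^r} \leq C\Delta_l^\beta$, both obtained by triangle inequality through $K_\theta$ and $\pi_\theta$ from (A\ref{ass:1}) \ref{ass:16}--\ref{ass:17}, with the weight comparison in (A\ref{ass:1}) \ref{ass:15} used to switch between $V_{\theta,l-1}^r$ and $V_{\theta,l}^r$. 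Geometric ergodicity of both kernels collapses the $n$-sum to a constant, giving $|S_B|_{V_{\theta,l}^r} \leq C|\varphi_{\theta,l-1}|_{V_{\theta,l}^r}\Delta_l^\beta$, which matches the first term on the RHS of the lemma.

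For $S_A$ the central identity, obtained by inserting $K_{\theta,l} = K_\theta + (K_{\theta,l}-K_\theta)$ inside $K_{\theta,l}^n = K_{\theta,l}^{n-1}K_{\theta,l}$ and using $\pi_{\theta,l}$-invariance of $K_{\theta,l}$, is
\begin{equation*}
(K_{\theta,l}^n - \pi_{\theta,l})(g) = (K_{\theta,l}^{n-1} - \pi_{\theta,l})(K_\theta g) + (K_{\theta,l}^{n-1} - \pi_{\theta,l})((K_{\theta,l} - K_\theta)g), \qquad n \geq 1.
\end{equation*}
Summing from $n=1$ yields $S_A = [g - \pi_{\theta,l}(g)] + \widehat{K_\theta g}_{\theta,l} + \widehat{(K_{\theta,l}-K_\theta)g}_{\theta,l}$. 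The middle summand is bounded by $C|K_\theta g|_{V_{\theta,l-1}^r}$ (standard Poisson estimate together with (A\ref{ass:1}) \ref{ass:15}), producing the third RHS term of the lemma; the last summand is $\leq C\Delta_l^\beta|g|_{V_{\theta,l-1}^r}$ by (A\ref{ass:1}) \ref{ass:16}, producing the second RHS term. The residual $n=0$ piece $g-\pi_{\theta,l}(g)$ is further decomposed as $[g-\pi_\theta(g)] + (\pi_\theta-\pi_{\theta,l})(g)$; the second bracket is $O(\Delta_l^\beta|g|_{V_{\theta,l-1}^r})$ by (A\ref{ass:1}) \ref{ass:17}, while the first is rewritten through the Poisson equation for $(K_\theta,\pi_\theta)$ as $\widehat{g}_\theta - \widehat{K_\theta g}_\theta$ and then converted back to $K_{\theta,l}$-based quantities modulo $\Delta_l^\beta$ perturbation errors via (A\ref{ass:1}) \ref{ass:16}--\ref{ass:17}.

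The hard part will be controlling this $n=0$ residue while extracting the $\Delta_l^\beta$ savings claimed by the statement: the naive pointwise bound $|g-\pi_{\theta,l}(g)|_{V_{\theta,l}^r} \leq C|g|_{V_{\theta,l-1}^r}$ lacks any such factor, so the Poisson-equation rewriting must be executed carefully so that every surviving term is either absorbed into the ``free'' $|K_\theta g|_{V_{\theta,l-1}^r}$ budget or carries a $\Delta_l^\beta$ from a kernel- or measure-perturbation estimate. Once the first bound is in hand, the companion bound on $|K_{\theta,l}(\widehat{\varphi}_{\theta,l}) - K_{\theta,l-1}(\widehat{\varphi}_{\theta,l-1})|_{V_{\theta,l}^r}$ follows cleanly from the identity
\begin{equation*}
K_{\theta,l}\widehat{\varphi}_{\theta,l} - K_{\theta,l-1}\widehat{\varphi}_{\theta,l-1} = K_{\theta,l}(\widehat{\varphi}_{\theta,l} - \widehat{\varphi}_{\theta,l-1}) + (K_{\theta,l} - K_{\theta,l-1})\widehat{\varphi}_{\theta,l-1},
\end{equation*}
where the first term is controlled by the first claim together with the $V_{\theta,l}^r$-contraction of $K_{\theta,l}$ from the drift in (A\ref{ass:1}) \ref{ass:12}, and the second by the triangle-through-$K_\theta$ estimate built from (A\ref{ass:1}) \ref{ass:16} together with $|\widehat{\varphi}_{\theta,l-1}|_{V_{\theta,l}^r} \leq C|\varphi_{\theta,l-1}|_{V_{\theta,l}^r}$.
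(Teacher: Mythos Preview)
Your decomposition is essentially the paper's, with only a cosmetic swap: the paper applies the kernel-change telescoping to $\varphi_{\theta,l}$ (its $T_1+T_2$) and runs the source-change sum under $K_{\theta,l-1}$ (its $T_3$), whereas you apply the kernel-change to $\varphi_{\theta,l-1}$ (your $S_B$) and run the source-change under $K_{\theta,l}$ (your $S_A$). The treatment of $S_B$ and of the $n\ge1$ portion of $S_A$ via geometric ergodicity together with (A\ref{ass:1}) \ref{ass:15}--\ref{ass:17} matches the paper line for line.

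The genuine gap is exactly the piece you flag as ``the hard part'': the $n=0$ residue $g(x)-\pi_{\theta,l}(g)$ in $S_A$. Your proposed fix---rewrite it as $\widehat{g}_\theta-K_\theta\widehat{g}_\theta$ via the Poisson equation for $(K_\theta,\pi_\theta)$ and then ``convert back to $K_{\theta,l}$-based quantities modulo $\Delta_l^\beta$''---is circular: that identity just restates $g-\pi_\theta(g)$, and $|\widehat{g}_\theta|_{V^r}$ is of order $|g|_{V^r}$ with neither a $\Delta_l^\beta$ saving nor any control through $|K_\theta g|_{V^r}$. In fact the bound you are chasing cannot hold for arbitrary $g$: take $K_{\theta,l}=K_{\theta,l-1}=K_\theta$ with $K_\theta(x,\cdot)\equiv\pi_\theta$ (so the minorization holds with $\epsilon=1$), and any $g$ with $\pi_\theta(g)=0$, $g\not\equiv0$; then $K_\theta g\equiv0$, the right-hand side of the lemma is $C|g|_{V^r}\Delta_l^\beta\to0$, yet the left-hand side equals $|\widehat{g}_{\theta,l}|_{V^r}=|g|_{V^r}>0$. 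The paper's own proof shares this lacuna: its summand identity for $T_3$ is only valid for $n\ge1$ (it invokes $K_{\theta,l-1}^{\,n-1}$), but the displayed bound is written as a sum from $n=0$, so the $n=0$ contribution is never actually controlled. You have correctly located the obstruction; neither your plan nor the paper's argument, as written, resolves it.
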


\begin{proof}
We consider the case $\widehat{\varphi}_{\theta,l}-\widehat{\varphi}_{\theta,l-1}|_{V_{\theta,l}^r}$ only as the other quantity can be proved similarly. 
We give the proof for $r=1$ only; the extension to general $r$ is trivial with only notational complications.
We note that the proof is similar to that of \cite[Propositiom 6.4]{non_linear} except with several notational changes which is why the proof is included.
We have the decomposition by using \cite[Proposition C.2.]{non_linear} and some simple calculations
$$
\widehat{\varphi}_{\theta,l}(x)-\widehat{\varphi}_{\theta,l-1}(x) = T_1 + T_2 + T_3
$$
where
\begin{eqnarray*}
T_1 & = & \sum_{n=1}^{\infty}\sum_{i=0}^{n-1} \left\{K_{\theta,l}^i-\pi_{\theta,l}\right\}
\left(\left\{K_{\theta,l}-K_{\theta,l-1}\right\}
\left(
\left\{K_{\theta,l-1}^{n-i-1}-\pi_{\theta,l-1}\right\}\left(\varphi_{\theta,l}\right)
\right)
\right)\\
T_2 & = & \sum_{n=1}^{\infty}\left\{\pi_{\theta,l-1}-\pi_{\theta,l}\right\}\left(
\left\{K_{\theta,l-1}^n-\pi_{\theta,l-1}\right\}\left(\varphi_{\theta,l}\right)
\right)
\\
T_3 & = & \sum_{n=0}^{\infty}\left\{K_{\theta,l-1}^n-\pi_{\theta,l-1}\right\}\left(\varphi_{\theta,l}-\varphi_{\theta,l-1}\right).
\end{eqnarray*}
We will bound each of these terms in turn.

For $T_1$ we consider the summand, for which one has that
$$
\left\{K_{\theta,l}^i-\pi_{\theta,l}\right\}
\left(\left\{K_{\theta,l}-K_{\theta,l-1}\right\}
\left(
\left\{K_{\theta,l-1}^{n-i-1}-\pi_{\theta,l-1}\right\}\left(\varphi_{\theta,l}\right)
\right)
\right)  = 
$$
$$
|\varphi_{\theta,l}|_{V_{\theta,l-1}}
\left|\left\{K_{\theta,l-1}^{n-i-1}-\pi_{\theta,l-1}\right\}\left(\varphi_{\theta,l}
/|\varphi_{\theta,l}|_{V_{\theta,l-1}}
\right)\right|_{V_{\theta,l-1}}
\times
$$
\begin{equation}
\left\{K_{\theta,l}^i-\pi_{\theta,l}\right\}
\left(\left\{K_{\theta,l}-K_{\theta,l-1}\right\}
\left(V_{\theta,l-1}
\frac{\left\{K_{\theta,l-1}^{n-i-1}-\pi_{\theta,l-1}\right\}\left(\varphi_{\theta,l}/|\varphi_{\theta,l}|_{ V_{\theta,l-1} } \right)}
{
V_{\theta,l-1} 
\left|\left\{K_{\theta,l-1}^{n-i-1}-\pi_{\theta,l-1}\right\}\left(\varphi_{\theta,l}
/|\varphi_{\theta,l}|_{V_{\theta,l}}
\right)\right|_{V_{\theta,l-1}}
}
\right)
\right).\label{eq:lem11}
\end{equation}
Then the R.H.S.~of \eqref{eq:lem11} is equal to
$$
|\varphi_{\theta,l}|_{V_{\theta,l-1}}
\left|\left\{K_{\theta,l-1}^{n-i-1}-\pi_{\theta,l-1}\right\}\left(\varphi_{\theta,l}
/|\varphi_{\theta,l}|_{V_{\theta,l-1}}
\right)\right|_{V_{\theta,l-1}}
\seminorm{K_{\theta,l}-K_{\theta,l-1}}_{V_{\theta,l}}
\times
$$
$$
\left\{K_{\theta,l}^i-\pi_{\theta,l}\right\}
\Bigg(V_{\theta,l}\frac{1}{V_{\theta,l}\seminorm{K_{\theta,l}-K_{\theta,l-1}}_{V_{\theta,l}}}\left\{K_{\theta,l}-K_{\theta,l-1}\right\}
\bigg(V_{\theta,l}\frac{V_{\theta,l-1}}{V_{\theta,l}}
\times
$$
$$
\frac{\left\{K_{\theta,l-1}^{n-i-1}-\pi_{\theta,l-1}\right\}\left(\varphi_{\theta,l}/|\varphi_{\theta,l}|_{ V_{\theta,l-1} } \right)}
{
V_{\theta,l-1} 
\left|\left\{K_{\theta,l-1}^{n-i-1}-\pi_{\theta,l-1}\right\}\left(\varphi_{\theta,l}
/|\varphi_{\theta,l}|_{V_{\theta,l-1}}
\right)\right|_{V_{\theta,l-1}}
}
\bigg)
\Bigg).
$$
Now,  for any $x\in\mathsf{X}$
$$
\left|V_{\theta,l}(x)\frac{V_{\theta,l-1}(x)}{V_{\theta,l}(x)}
\frac{\left\{K_{\theta,l-1}^{n-i-1}-\pi_{\theta,l-1}\right\}\left(\varphi_{\theta,l}/|\varphi_{\theta,l}|_{ V_{\theta,l-1} } \right)(x)}
{
V_{\theta,l-1}(x)
\left|\left\{K_{\theta,l-1}^{n-i-1}-\pi_{\theta,l-1}\right\}\left(\varphi_{\theta,l-1}
/|\varphi_{\theta,l}|_{V_{\theta,l-1}}
\right)\right|_{V_{\theta,l-1}}
}
\right|\leq 
\left\{\sup_{(\theta,l)\in\Theta\times\mathbb{N}}|V_{\theta,l-1}|_{V_{\theta,l}}\right\}
V_{\theta,l}(x)
$$
so that, for any $x\in\mathsf{X}$
$$
\Bigg|\frac{1}{V_{\theta,l}(x)\seminorm{K_{\theta,l}-K_{\theta,l-1}}_{V_{\theta,l}}}\left\{K_{\theta,l}-K_{\theta,l-1}\right\}
\bigg(
V_{\theta,l}\frac{V_{\theta,l-1}}{V_{\theta,l}}\times
$$
$$
\frac{\left\{K_{\theta,l-1}^{n-i-1}-\pi_{\theta,l-1}\right\}\left(\varphi_{\theta,l}/|\varphi_{\theta,l}|_{ V_{\theta,l-1} } \right)}
{
V_{\theta,l-1} 
\left|\left\{K_{\theta,l-1}^{n-i-1}-\pi_{\theta,l-1}\right\}\left(\varphi_{\theta,l}
/|\varphi_{\theta,l}|_{V_{\theta,l-1}}
\right)\right|_{V_{\theta,l-1}}
}
\bigg)(x)
\Bigg|\leq \left\{\sup_{(\theta,l)\in\Theta\times\mathbb{N}}|V_{\theta,l-1}|_{V_{\theta,l}}\right\}.
$$
Therefore,  returning to \eqref{eq:lem11} we have
$$
\frac{|T_1|}{V_{\theta,l}(x)} \leq |\varphi_{\theta,l}|_{V_{\theta,l-1}} 
\left\{\sup_{(\theta,l)\in\Theta\times\mathbb{N}}|V_{\theta,l-1}|_{V_{\theta,l}}\right\}
\sum_{n=1}^{\infty}\sum_{i=0}^{n-1}
|K_{\theta,l-1}^{n-i-1}-\pi_{\theta,l-1}|_{V_{\theta,l}}
\seminorm{K_{\theta,l}-K_{\theta,l-1}}_{V_{\theta,l}}
|K_{\theta,l}^i-\pi_{\theta,l}|_{V_{\theta,l-1}}
$$
Recalling \eqref{eq:lem12},  and by applying (A\ref{ass:1}) \ref{ass:15},  \ref{ass:16} also,  we have
\begin{equation}
\frac{|T_1|}{V_{\theta,l}(x)} \leq C|\varphi_{\theta,l}|_{V_{\theta,l-1}}\Delta_l^{\beta}.
\label{eq:lem13}
\end{equation}

For $T_2$ we have
\begin{eqnarray*}
T_2 & = & \sum_{n\in\mathbb{N}}|\varphi_{\theta,l}|_{V_{\theta,l-1}}\left|\left\{K_{\theta,l-1}^n-\pi_{\theta,l-1}\right\}
\left(\varphi_{\theta,l}/|\varphi_{\theta,l}|_{V_{\theta,l-1}}\right)
\right|_{V_{\theta,l-1}}\times \\ & & 
\left\{\pi_{\theta,l-1} - \pi_{\theta} + \pi_{\theta} - \pi_{\theta,l}\right\}
\left(V_{\theta,l}\frac{V_{\theta,l-1}}{V_{\theta,l}}
\frac{\left\{K_{\theta,l-1}^n-\pi_{\theta,l-1}\right\}\left(\varphi_{\theta,l}/|\varphi_{\theta,l}|_{V_{\theta,l-1}}\right)}
{V_{\theta,l-1}\left|
\left\{K_{\theta,l-1}^n-\pi_{\theta,l-1}\right\}\left(\varphi_{\theta,l}/|\varphi_{\theta,l}|_{V_{\theta,l-1}}\right)
\right|_{V_{\theta,l-1}}}
\right).
\end{eqnarray*}
Then on taking absolute values,  applying the triangular inequality,  \eqref{eq:lem12},  
(A\ref{ass:1}) \ref{ass:15}
and (A\ref{ass:1})
\ref{ass:17} we have
\begin{equation}
\frac{|T_2|}{V_{\theta,l}(x)} \leq C|\varphi_{\theta,l}|_{V_{\theta,l-1}}\Delta_l^{\beta}.
\label{eq:lem14}
\end{equation}

For $T_3$ considering the summand:
$$
\left\{K_{\theta,l-1}^n-\pi_{\theta,l-1}\right\}\left(\varphi_{\theta,l}-\varphi_{\theta,l-1}\right) =
$$
$$
\left\{K_{\theta,l-1}^{n-1}-\pi_{\theta,l-1}\right\}
\left(\left\{K_{\theta,l-1}-K_{\theta}\right\}\left(
\varphi_{\theta,l}-\varphi_{\theta,l-1}
\right) + 
K_{\theta}\left(
\varphi_{\theta,l}-\varphi_{\theta,l-1}
\right)
\right) =
$$
$$
|\varphi_{\theta,l}-\varphi_{\theta,l-1}|_{V_{\theta,l-1}}
\left\{K_{\theta,l-1}^{n-1}-\pi_{\theta,l-1}\right\}
\Bigg(
V_{\theta,l-1}\Bigg\{
\frac{\left\{K_{\theta,l-1}-K_{\theta}\right\}\left(
\{\varphi_{\theta,l}-\varphi_{\theta,l-1}\}/|\varphi_{\theta,l}-\varphi_{\theta,l-1}|_{V_{\theta,l-1}}
\right)}{V_{\theta,l-1}} + 
$$
$$
\frac{K_{\theta}\left(
\{\varphi_{\theta,l}-\varphi_{\theta,l-1}\}/|\varphi_{\theta,l}-\varphi_{\theta,l-1}|_{V_{\theta,l-1}}
\right)}{V_{\theta,l-1}}\Bigg\}
\Bigg).
$$
Thus using (A\ref{ass:1}) \ref{ass:16},  \ref{ass:18},  we have
$$
\frac{|T_3|}{V_{\theta,l}(x)} \leq \frac{V_{\theta,l-1}(x)}{V_{\theta,l}(x)} \sum_{n=0}^{\infty}
|K_{\theta,l-1}^{n-1}-\pi_{\theta,l-1}|_{V_{\theta,l-1}}
\Big\{
|\varphi_{\theta,l}-\varphi_{\theta,l-1}|_{V_{\theta,l-1}}
\seminorm{K_{\theta,l-1}-K_{\theta}}_{V_{\theta,l-1}} + 
$$
$$
|
K_{\theta}\left(
\varphi_{\theta,l}-\varphi_{\theta,l-1}\right)
|_{V_{\theta,l-1}}
\Big\} \leq
$$
\begin{equation}
C\left(
|\varphi_{\theta,l}-\varphi_{\theta,l-1}|_{V_{\theta,l-1}}\Delta_l^{\beta} +
|
K_{\theta}\left(
\varphi_{\theta,l}-\varphi_{\theta,l-1}\right)
|_{V_{\theta,l-1}}
\right).
\label{eq:lem15}
\end{equation}
Combining \eqref{eq:lem13}-\eqref{eq:lem15} we conclude the proof.
\end{proof}

\begin{lem}\label{lem:pois_cont_theta}
Assume (A\ref{ass:1}) \ref{ass:11}-\ref{ass:12}, \ref{ass:111}-\ref{ass:113}.  Then for each $r\in(0,1]$ there exists a $C\in(0,\infty)$
such that for any $(\theta,\theta',l,\varphi_{\theta,l})\in\Theta^2\times\mathbb{N}_0\times\mathscr{L}_{V_{\theta,l}^r}$
$$
\max\left\{
|\widehat{\varphi}_{\theta,l}-\widehat{\varphi}_{\theta',l}|_{V_{\theta,l}^r}, 
|K_{\theta,l}(\widehat{\varphi}_{\theta,l})-K_{\theta',l}(\widehat{\varphi}_{\theta',l})|_{V_{\theta,l}^r}
\right\}
\leq C|\theta-\theta'|^{\zeta}.
$$
\end{lem}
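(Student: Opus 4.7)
The plan is to mirror the proof of Lemma \ref{lem:pois_perturb}, but replacing the level perturbation ($l$ versus $l-1$) by a parameter perturbation ($\theta$ versus $\theta'$) at fixed level $l$. Starting from the series representation \eqref{eq:pois_eq}, write
$$
\widehat{\varphi}_{\theta,l}(x)-\widehat{\varphi}_{\theta',l}(x) = \sum_{n=0}^{\infty}\Bigl\{[K_{\theta,l}^n-\pi_{\theta,l}](\varphi_{\theta,l})(x) - [K_{\theta',l}^n-\pi_{\theta',l}](\varphi_{\theta',l})(x)\Bigr\},
$$
and apply the identity from \cite[Proposition C.2]{non_linear} (used already in Lemma \ref{lem:pois_perturb}) to split each summand into three pieces $T_1(n)+T_2(n)+T_3(n)$: $T_1$ isolates the kernel perturbation $K_{\theta,l}-K_{\theta',l}$, via the telescoping identity $K_{\theta,l}^n-K_{\theta',l}^n=\sum_{i=0}^{n-1}K_{\theta,l}^i(K_{\theta,l}-K_{\theta',l})K_{\theta',l}^{n-i-1}$; $T_2$ isolates the invariant-measure perturbation $\pi_{\theta,l}-\pi_{\theta',l}$; and $T_3$ isolates the integrand perturbation $\varphi_{\theta,l}-\varphi_{\theta',l}$.

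Next I would bound each $T_j$ in the $V_{\theta,l}^r$-norm. For $T_1$, insert $\pm\pi_{\theta,l}$ on each side of the middle factor to form centred operators whose norms decay geometrically in $i$ and in $n-i-1$ by the uniform ergodicity estimate \eqref{eq:lem12} that is ensured by (A\ref{ass:1}) \ref{ass:11}--\ref{ass:12}; the middle factor contributes a factor $C|\theta-\theta'|^{\zeta}$ by (A\ref{ass:1}) \ref{ass:113}, after which summing over $n$ and $i$ yields a convergent series $\sum_{n\ge 1}n\rho^{n-1}$. For $T_2$, apply (A\ref{ass:1}) \ref{ass:113} to the measure difference combined with \eqref{eq:lem12} on the inner centred iterate, and sum the resulting geometric series in $n$. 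For $T_3$, use the Hölder-in-$\theta$ behaviour of $\varphi_{\theta,l}$, which is provided by (A\ref{ass:1}) \ref{ass:111} in the intended application $\varphi_{\theta,l}=H_l(\theta,\cdot)$, together again with \eqref{eq:lem12}.

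The second bound $|K_{\theta,l}(\widehat{\varphi}_{\theta,l})-K_{\theta',l}(\widehat{\varphi}_{\theta',l})|_{V_{\theta,l}^r}$ is obtained by writing
$$
K_{\theta,l}(\widehat{\varphi}_{\theta,l})-K_{\theta',l}(\widehat{\varphi}_{\theta',l}) = K_{\theta,l}\bigl(\widehat{\varphi}_{\theta,l}-\widehat{\varphi}_{\theta',l}\bigr) + \bigl(K_{\theta,l}-K_{\theta',l}\bigr)\bigl(\widehat{\varphi}_{\theta',l}\bigr).
$$
The first term is handled by the bound just established plus the uniform $V_{\theta,l}^r$-boundedness of $K_{\theta,l}$ implied by the drift (A\ref{ass:1}) \ref{ass:12}; the second term is handled by (A\ref{ass:1}) \ref{ass:113} together with the $V^r$-norm estimate on $\widehat{\varphi}_{\theta',l}$ that follows from \eqref{eq:lem12}.

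The main obstacle I anticipate is the bookkeeping with the weight functions: the natural Lyapunov function for $K_{\theta',l}$ is $V_{\theta',l}$ rather than $V_{\theta,l}$, so pushing estimates through operators indexed by $\theta'$ while measuring in $V_{\theta,l}^r$ requires careful insertion and cancellation of the relevant norms of the integrands, exactly in the style of \eqref{eq:lem11}, together with (A\ref{ass:1}) \ref{ass:15} to control ratios of weight functions uniformly. Once this bookkeeping is in place, the remainder reduces to a geometric-series calculation essentially identical to \eqref{eq:lem13}--\eqref{eq:lem15}.
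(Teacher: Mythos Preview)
Your proposal is correct and matches the paper's own approach exactly: the paper's proof consists of the single sentence ``The proof is very similar to that of Lemma \ref{lem:pois_perturb} and is hence omitted,'' and you have spelled out precisely that analogy, replacing the $l\leftrightarrow l-1$ perturbation by a $\theta\leftrightarrow\theta'$ perturbation and invoking (A\ref{ass:1}) \ref{ass:113} in place of (A\ref{ass:1}) \ref{ass:16}--\ref{ass:17} and (A\ref{ass:1}) \ref{ass:111} in place of (A\ref{ass:1}) \ref{ass:18}. One small remark: the lemma as stated does not list (A\ref{ass:1}) \ref{ass:15} among its hypotheses, so your anticipated bookkeeping for the ratio $V_{\theta',l}/V_{\theta,l}$ cannot rely on it; however, since (A\ref{ass:1}) \ref{ass:113} is already phrased in the $V_{\theta,l}^r$-norm, the weight-function mismatch you flag is largely absorbed by that assumption and the uniformity in \eqref{eq:lem12}.
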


\begin{proof}
The proof is very similar to that of Lemma \ref{lem:pois_perturb} and is hence omitted.
\end{proof}

\begin{lem}\label{lem:2}
Assume (A\ref{ass:1}) \ref{ass:15},  \ref{ass:17}-\ref{ass:110}. Then there exists a $C\in(0,\infty)$ such that
for any $(\theta,\theta',l)\in\Theta^2\times\mathbb{N}_0$  
$$
\left|\frac{\partial h_l(\theta)}{\partial \theta}-\frac{\partial h(\theta')}{\partial \theta'}\right|
\leq C\left(\Delta_l^{\beta}+|\theta-\theta'|^{\zeta}\right)
$$
where $\beta\in(0,\infty)$ is as (A\ref{ass:1}) \ref{ass:17} and $\zeta\in(1/2,1]$ is as (A\ref{ass:1}) \ref{ass:110}.
\end{lem}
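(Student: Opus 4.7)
The plan is to insert the intermediate quantity $\partial h(\theta)/\partial\theta$ and use the triangle inequality:
$$
\left|\frac{\partial h_l(\theta)}{\partial \theta}-\frac{\partial h(\theta')}{\partial \theta'}\right|
\le
\underbrace{\left|\frac{\partial h_l(\theta)}{\partial \theta}-\frac{\partial h(\theta)}{\partial \theta}\right|}_{=:A}
+
\underbrace{\left|\frac{\partial h(\theta)}{\partial \theta}-\frac{\partial h(\theta')}{\partial \theta'}\right|}_{=:B}.
$$
I will handle $B$ using the $\theta$-continuity assumption and $A$ using the level-$l$ bias assumptions.

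For $B$, the product rule gives $\partial h(\theta)/\partial\theta=\int (\partial H(\theta,x)/\partial\theta)\pi_\theta(x)dx+\int H(\theta,x)(\partial\pi_\theta(x)/\partial\theta)dx$, and similarly at $\theta'$. The two resulting differences are precisely the quantities controlled by (A\ref{ass:1}) \ref{ass:110}, so $B\le 2C|\theta-\theta'|^\zeta$.

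For $A$, I apply the product rule to both $h_l$ and $h$ and split
$$
A \le
\left|\int_{\mathsf{X}} \tfrac{\partial H_l(\theta,x)}{\partial\theta}\pi_{\theta,l}(x)dx-\int_{\mathsf{X}} \tfrac{\partial H(\theta,x)}{\partial\theta}\pi_\theta(x)dx\right|
+
\left|\int_{\mathsf{X}} H_l(\theta,x)\tfrac{\partial\pi_{\theta,l}(x)}{\partial\theta}dx-\int_{\mathsf{X}} H(\theta,x)\tfrac{\partial\pi_\theta(x)}{\partial\theta}dx\right|.
$$
The second term is bounded by $C\Delta_l^\beta$ directly from (A\ref{ass:1}) \ref{ass:19}. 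For the first term, I insert the pivot $\int(\partial H_l/\partial\theta)\pi_\theta$:
$$
\left|\int \tfrac{\partial H_l}{\partial\theta}(\pi_{\theta,l}-\pi_\theta)\right|
+\left|\pi_\theta\!\left(\tfrac{\partial H_{\theta,l}}{\partial\theta}-\tfrac{\partial H_\theta}{\partial\theta}\right)\right|.
$$
The second piece is $\le C\Delta_l^\beta$ by (A\ref{ass:1}) \ref{ass:18}. The first piece is bounded by noting that $(\partial H_l/\partial\theta)/V_{\theta,l}$ is uniformly bounded by (A\ref{ass:1}) \ref{ass:15}, which combined with the comparison $V_{\theta,l}\le CV_\theta$ (also from \ref{ass:15}) gives $|\partial H_l/\partial\theta|\le C'V_\theta$; hence (A\ref{ass:1}) \ref{ass:17} with $r=1$ yields
$$
\left|\int \tfrac{\partial H_l}{\partial\theta}(\pi_{\theta,l}-\pi_\theta)\right|
\le C'\,\|\pi_{\theta,l}-\pi_\theta\|_{V_\theta}\le C''\Delta_l^\beta.
$$
Combining gives $A\le C\Delta_l^\beta$, and together with the bound on $B$ the lemma follows.

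The argument is essentially bookkeeping, and the main (mild) obstacle is being careful with the switch between the Lyapunov functions $V_{\theta,l}$ and $V_\theta$: the hypothesis (A\ref{ass:1}) \ref{ass:17} is formulated in the norm $\|\cdot\|_{V_\theta^r}$ while the size of $\partial H_l/\partial\theta$ is controlled by $V_{\theta,l}$, so the uniform domination $V_{\theta,l}\le CV_\theta$ supplied by (A\ref{ass:1}) \ref{ass:15} must be invoked before applying \ref{ass:17}. No Poisson-equation machinery is needed here, so I do not expect to rely on Lemmata \ref{lem:pois_cont}--\ref{lem:pois_cont_theta}; the assumptions (A\ref{ass:1}) \ref{ass:15}, \ref{ass:17}--\ref{ass:110} feed directly into the three splittings above.
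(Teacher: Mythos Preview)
Your proof is correct and follows essentially the same approach as the paper: the paper writes $\partial h_l(\theta)/\partial\theta - \partial h(\theta')/\partial\theta'$ directly as a sum of four terms $T_1,\dots,T_4$, which correspond exactly to the two summands of your $A$ ($T_1,T_2$) and the two summands of your $B$ ($T_3,T_4$), and each is bounded using the same assumptions in the same way, including the pivot $\int(\partial H_l/\partial\theta)\pi_\theta$ for $T_1$. Your remark about passing from $V_{\theta,l}$ to $V_\theta$ via (A\ref{ass:1}) \ref{ass:15} before applying \ref{ass:17} makes explicit a step the paper leaves implicit.
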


\begin{proof}
We have that 
$$
\frac{\partial h_l(\theta)}{\partial \theta}-\frac{\partial h(\theta')}{\partial \theta'} = \sum_{j=1}^4T_j 
$$
where
\begin{eqnarray*}
T_1 & = & \int_{\mathsf{X}}\left\{
\frac{\partial H_{l}(\theta,x)}{\partial \theta}\pi_{\theta,l}(x) - 
\frac{\partial H(\theta,x)}{\partial \theta}\pi_{\theta}(x)
\right\}dx \\
T_2 & = & \int_{\mathsf{X}}\left\{
\frac{\partial \pi_{\theta,l}(x)}{\partial \theta}H_{l}(\theta,x) - 
\frac{\partial \pi_{\theta}(x)}{\partial \theta}H(\theta,x)
\right\}dx\\
T_3 & = & \int_{\mathsf{X}}\left\{
\frac{\partial H(\theta,x)}{\partial \theta}\pi_{\theta}(x) - 
\frac{\partial H(\theta',x)}{\partial \theta'}\pi_{\theta'}(x)
\right\}dx \\
T_4 & = & \int_{\mathsf{X}}\left\{
\frac{\partial \pi_{\theta}(x)}{\partial \theta}H(\theta,x) - 
\frac{\partial \pi_{\theta'}(x)}{\partial \theta'}H(\theta',x)
\right\}dx
\end{eqnarray*}
We will bound $T_1,\dots,T_4$ respectively in order to complete the proof.

For $T_1$ we have the upper-bound
$$
|T_1| \leq 
\left|
\int_{\mathsf{X}}\left\{
\frac{\partial H_{l}(\theta,x)}{\partial \theta}
\{\pi_{\theta,l}(x)-\pi_{\theta}(x)\}\right\}dx\right| + 
\left|
\int_{\mathsf{X}}\left\{
\frac{\partial H_{l}(\theta,x)}{\partial \theta}-\frac{\partial H(\theta,x)}{\partial \theta}
\right\}\pi_{\theta}(x)
dx\right|
$$
For the first term on the R.H.S.~we can use (A\ref{ass:1}) \ref{ass:15}, \ref{ass:17} and for the second term
(A\ref{ass:1}) \ref{ass:18},  to obtain
$$
|T_1| \leq C\Delta_l^{\beta}.
$$
For $T_2$ one can use (A\ref{ass:1}) \ref{ass:19} to yield
$$
|T_2| \leq C\Delta_l^{\beta}.
$$
For $T_3$ and $T_4$ a direct application of (A\ref{ass:1}) \ref{ass:110}
gives
\begin{equation}\label{eq:lem21}
\max\{|T_3|,|T_4|\} \leq C|\theta-\theta'|^{\zeta}.
\end{equation}
Combining the bounds on $T_1,\dots,T_4$ allows one to conclude the proof.
\end{proof}

\begin{cor}\label{cor:1}
Assume (A\ref{ass:1}) \ref{ass:15},  \ref{ass:17}-\ref{ass:110}. Then there exists a $C\in(0,\infty)$ such that
for any $(\theta,\theta',l)\in\Theta^2\times\mathbb{N}$  
$$
\left|\frac{\partial h_l(\theta)}{\partial \theta}-\frac{\partial h_{l-1}(\theta')}{\partial \theta'}\right|
\leq C\left(\Delta_l^{\beta}+|\theta-\theta'|^{\zeta}\right)
$$
where $\beta\in(0,\infty)$ is as (A\ref{ass:1}) \ref{ass:17} and $\zeta\in(1/2,1]$ is as (A\ref{ass:1}) \ref{ass:110}.
\end{cor}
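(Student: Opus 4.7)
The corollary is a direct consequence of Lemma \ref{lem:2} via the triangle inequality. My plan is to insert the exact function $h$ as an intermediate point, so that both pieces fall within the scope of Lemma \ref{lem:2}.

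Specifically, I would write
$$
\left|\frac{\partial h_l(\theta)}{\partial \theta}-\frac{\partial h_{l-1}(\theta')}{\partial \theta'}\right|
\leq
\left|\frac{\partial h_l(\theta)}{\partial \theta}-\frac{\partial h(\theta)}{\partial \theta}\right|
+
\left|\frac{\partial h(\theta)}{\partial \theta}-\frac{\partial h_{l-1}(\theta')}{\partial \theta'}\right|.
$$
Applying Lemma \ref{lem:2} with the pair $(\theta,\theta)$ (and level $l$) to the first summand gives an upper bound of $C\Delta_l^\beta$. Applying Lemma \ref{lem:2} to the second summand (with the roles of $(l,\theta)$ and $(0,\theta')$ played by $(l-1,\theta')$ and the exact model at $\theta$; the statement of Lemma \ref{lem:2} is symmetric in the two arguments up to a harmless relabeling) yields an upper bound of $C(\Delta_{l-1}^\beta + |\theta-\theta'|^\zeta)$.

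The only minor point is that the corollary is stated in terms of $\Delta_l^\beta$ rather than $\Delta_{l-1}^\beta$. Since $\Delta_l = 2^{-l}$ we have $\Delta_{l-1}=2\Delta_l$, hence $\Delta_{l-1}^\beta = 2^\beta \Delta_l^\beta \leq 2^\beta \Delta_l^\beta$, which is absorbed into the generic constant $C$. Collecting the two bounds then gives the required
$$
\left|\frac{\partial h_l(\theta)}{\partial \theta}-\frac{\partial h_{l-1}(\theta')}{\partial \theta'}\right|
\leq C\bigl(\Delta_l^\beta + |\theta-\theta'|^\zeta\bigr),
$$
with $\beta$ and $\zeta$ as in (A\ref{ass:1}) \ref{ass:17} and (A\ref{ass:1}) \ref{ass:110} respectively.

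There is no real obstacle here: this is a one-line corollary whose entire content is a triangle inequality combined with the observation that adjacent precision levels differ by a factor of two. The actual analytic work (bounding differentials of $h_l$ against $h$ uniformly in $\theta$) has already been absorbed into Lemma \ref{lem:2}.
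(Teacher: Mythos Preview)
Your proposal is correct and matches the paper's own proof, which reads in its entirety ``Follows from the triangular inequality, Lemma \ref{lem:2} and the bounds in \eqref{eq:lem21}.'' The paper may have in mind the three-term split through both $\partial h(\theta)/\partial\theta$ and $\partial h(\theta')/\partial\theta'$ (hence the explicit reference to \eqref{eq:lem21} for the middle piece), whereas you use a two-term split and invoke Lemma~\ref{lem:2} twice; this is a cosmetic difference only, and your handling of $\Delta_{l-1}^\beta = 2^\beta \Delta_l^\beta$ is exactly what is needed.
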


\begin{proof}
Follows from the triangular inequality,  Lemma \ref{lem:2} and the bounds in \eqref{eq:lem21}.
\end{proof}

\begin{lem}\label{lem:av}
Assume (A\ref{ass:1}) \ref{ass:11}-\ref{ass:114}.  Then there exists a $C\in(0,\infty)$ such that
for any $(l,\theta^{\star}_l,\theta^{\star}_{l-1})\in\mathbb{N}\times\Theta^2$  
$$
\max\left\{
\left|\check{\pi}_{\boldsymbol{\theta}_l^{\star},l}\left(
\widehat{H}_{\theta_l^{\star},l}^2\otimes 1 - 
\widehat{H}_{\theta_{l}^{\star},l}\otimes \widehat{H}_{\theta_{l-1}^{\star},l-1}
\right)\right|,
\left|\check{\pi}_{\boldsymbol{\theta}_l^{\star},l}\left(
1\otimes \widehat{H}_{\theta_{l-1}^{\star},l-1}^2- 
\widehat{H}_{\theta_{l}^{\star},l}\otimes \widehat{H}_{\theta_{l-1}^{\star},l-1}
\right)\right|
\right\} \leq 
$$
$$
C\pi_{\theta^{\star}_l,l}(V_{\theta^{\star}_l,l})\left(
\Delta_l^{\beta} + |\theta_l^{\star}-\theta_{l-1}^{\star}|^{\zeta} + \check{\pi}_{\boldsymbol{\theta}_l^{\star},l}\left(
\mathcal{D}^2
\right)^{1/2}
\right)
$$
where $\beta\in(0,\infty)$ is as (A\ref{ass:1}) \ref{ass:17}, $\zeta\in(1/2,1]$ is as (A\ref{ass:1}) \ref{ass:110} and $\mathcal{D}$ is as (A\ref{ass:1}) \ref{ass:112}.
\end{lem}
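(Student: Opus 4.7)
The plan is to treat only the first argument of the max, since the second is symmetric (exchange $l$ with $l-1$ and the roles of the two coordinates in the $\otimes$ product). First I would use the algebraic identity
$$
\widehat{H}_{\theta_l^\star,l}^2 \otimes 1 - \widehat{H}_{\theta_l^\star,l}\otimes \widehat{H}_{\theta_{l-1}^\star,l-1} = \bigl(\widehat{H}_{\theta_l^\star,l}\otimes 1\bigr)\bigl(\widehat{H}_{\theta_l^\star,l}\otimes 1 - 1 \otimes \widehat{H}_{\theta_{l-1}^\star,l-1}\bigr),
$$
and then Cauchy--Schwarz under $\check{\pi}_{\boldsymbol{\theta}_l^\star,l}$ to split the expression into a product of two factors.

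The first factor equals $\pi_{\theta_l^\star,l}(\widehat{H}_{\theta_l^\star,l}^2)^{1/2}$ by the marginal property of $\check{\pi}$. I would establish the pointwise bound $|\widehat{H}_{\theta,l}|\leq C V_{\theta,l}^{1/2}$ by inserting the Poisson series representation \eqref{eq:pois_eq}, invoking \eqref{eq:lem12} with $r=1/2$, and using $|H_l|\leq C V_{\theta,l-1}^{1/2}\leq C V_{\theta,l}^{1/2}$ from (A\ref{ass:1}) \ref{ass:15}. This bounds the first factor by $C\pi_{\theta_l^\star,l}(V_{\theta_l^\star,l})^{1/2}$.

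For the second factor I would telescope the integrand as
$$
\widehat{H}_{\theta_l^\star,l}(x) - \widehat{H}_{\theta_{l-1}^\star,l-1}(y) = \Delta^x(x,y) + \Delta^\theta(y) + \Delta^l(y),
$$
where $\Delta^x=\widehat{H}_{\theta_l^\star,l}(x)-\widehat{H}_{\theta_l^\star,l}(y)$ is handled by Lemma \ref{lem:pois_cont} together with (A\ref{ass:1}) \ref{ass:114}, giving $|\Delta^x|\leq C\mathcal{D}(x,y)$; $\Delta^\theta=\widehat{H}_{\theta_l^\star,l}(y)-\widehat{H}_{\theta_{l-1}^\star,l}(y)$ is handled by Lemma \ref{lem:pois_cont_theta} with $r=1/2$, giving $|\Delta^\theta|\leq C|\theta_l^\star-\theta_{l-1}^\star|^\zeta V_{\theta_l^\star,l}(y)^{1/2}$; and $\Delta^l=\widehat{H}_{\theta_{l-1}^\star,l}(y)-\widehat{H}_{\theta_{l-1}^\star,l-1}(y)$ is handled by Lemma \ref{lem:pois_perturb} with $\varphi=H$ and $r=1/2$, whose three internal summands are dominated using (A\ref{ass:1}) \ref{ass:15} (for the $|H_{l-1}|_{V^{1/2}}$ and $|H_l-H_{l-1}|_{V^{1/2}}$ contributions) and (A\ref{ass:1}) \ref{ass:18} (for the $|K_\theta(H_l-H_{l-1})|_{V^{1/2}}$ contribution), giving $|\Delta^l|\leq C\Delta_l^\beta V_{\theta_{l-1}^\star,l}(y)^{1/2}$. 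Squaring via $(a+b+c)^2\leq 3(a^2+b^2+c^2)$ and integrating against $\check{\pi}_{\boldsymbol{\theta}_l^\star,l}$ yields a $C\check{\pi}_{\boldsymbol{\theta}_l^\star,l}(\mathcal{D}^2)$ term plus two cross-measure integrals of Lyapunov functions evaluated against the second marginal $\pi_{\theta_{l-1}^\star,l-1}$.

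The main obstacle I anticipate is the bookkeeping required to dominate $\pi_{\theta_{l-1}^\star,l-1}(V_{\theta_l^\star,l})$ (and its $l\leftrightarrow l-1$ variant) by a multiple of $\pi_{\theta_l^\star,l}(V_{\theta_l^\star,l})$. I would do this by adding and subtracting $\pi_{\theta_{l-1}^\star}$ and $\pi_{\theta_l^\star}$, applying (A\ref{ass:1}) \ref{ass:17} at $r=1$ to swap $\pi_{\theta,l}\leftrightarrow\pi_\theta$ at a cost of $C\Delta_l^\beta$, (A\ref{ass:1}) \ref{ass:113} at $r=1$ to swap $\pi_{\theta_{l-1}^\star}\leftrightarrow\pi_{\theta_l^\star}$ at a cost of $C|\theta_l^\star-\theta_{l-1}^\star|^\zeta$, and controlling the resulting $V$-ratios using the uniform bounds from (A\ref{ass:1}) \ref{ass:15}; the error terms generated are of precisely the same shape as the three quantities $\Delta_l^\beta$, $|\theta_l^\star-\theta_{l-1}^\star|^\zeta$, $\check{\pi}(\mathcal{D}^2)^{1/2}$ already present in the target bound, so they absorb. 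Taking the square root of the second factor and multiplying by the first then yields the claimed inequality for the first argument of the max; the second argument follows by the same argument with the coordinates permuted.
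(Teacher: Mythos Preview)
Your proposal is correct and follows essentially the same strategy as the paper: factor out $\widehat{H}_{\theta_l^\star,l}$, telescope the remaining difference into a point-change, a $\theta$-change, and a level-change, and control these three pieces respectively via Lemma~\ref{lem:pois_cont}, Lemma~\ref{lem:pois_cont_theta}, and Lemma~\ref{lem:pois_perturb}.

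There are two minor differences worth noting. First, you apply Cauchy--Schwarz once at the outset and then bound the $L^2(\check{\pi})$ norm of the telescoped difference; the paper instead keeps the three summands $T_1,T_2,T_3$ as integrals against $\check{\pi}$ and applies Cauchy--Schwarz only where needed (for $T_2$). Second, and more substantively, the paper telescopes in the order \emph{level} (at $x$), then \emph{point}, then $\theta$ (at $y$, level $l-1$), whereas you do \emph{point}, then $\theta$ (at $y$, level $l$), then \emph{level} (at $y$). The paper's ordering is arranged so that the level-change term lives entirely in the $x$-coordinate and integrates directly against $\pi_{\theta_l^\star,l}$, while the $\theta$-change term lives at level $l-1$ in the $y$-coordinate and integrates against $\pi_{\theta_{l-1}^\star,l-1}$; in both cases the Lyapunov index matches the marginal, so no cross-index bookkeeping is needed. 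Your ordering forces you to integrate $V_{\theta,l}(y)$-type quantities against $\pi_{\theta_{l-1}^\star,l-1}$, which is exactly the obstacle you flag. Your proposed fix does go through (route $V_{\theta,l}\leq CV_\theta$ via (A\ref{ass:1})~\ref{ass:15}, then control $\pi_{\theta,l-1}(V_\theta)$ by $\pi_\theta(V_\theta)+C\Delta_l^\beta$ via (A\ref{ass:1})~\ref{ass:17}, bounded by the drift (A\ref{ass:1})~\ref{ass:14}; (A\ref{ass:1})~\ref{ass:113} is in fact not needed here), but it is extra work that the paper's ordering sidesteps entirely.
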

\begin{proof}
We consider the case of 
$$
\left|\check{\pi}_{\boldsymbol{\theta}_l^{\star},l}\left(
\widehat{H}_{\theta_l^{\star},l}^2\otimes 1 - 
\widehat{H}_{\theta_{l-1}^{\star},l}\otimes \widehat{H}_{\theta_l^{\star},l-1}
\right)\right|
$$
as the other term can be treated by a symmetric argument.  We begin by noting 
that 
$$
\int_{\mathsf{X}\times\mathsf{X}}\left\{\widehat{H}_{\theta_l^{\star},l}(x)^2 - \widehat{H}_{\theta_l^{\star},l}(x)\widehat{H}_{\theta_{l-1}^{\star},l-1}(y)\right\}
\check{\pi}_{\boldsymbol{\theta}_l^{\star},l}\left(d(x,y)\right)
= 
\sum_{j=1}^3 T_j
$$
where
\begin{eqnarray*}
T_1 & = & \int_{\mathsf{X}\times\mathsf{X}}\widehat{H}_{\theta_l^{\star},l}(x)\left\{\widehat{H}_{\theta_l^{\star},l}(x)-
\widehat{H}_{\theta_l^{\star},l-1}(x)\right\}
\check{\pi}_{\boldsymbol{\theta}_l^{\star},l}\left(d(x,y)\right)
\\
T_2 & = & 
\int_{\mathsf{X}\times\mathsf{X}}
\widehat{H}_{\theta_l^{\star},l}(x)\left\{
\widehat{H}_{\theta_l^{\star},l-1}(x) - \widehat{H}_{\theta_l^{\star},l-1}(y)
\right\}
\check{\pi}_{\boldsymbol{\theta}_l^{\star},l}\left(d(x,y)\right)
\\
T_3 & = & \int_{\mathsf{X}\times\mathsf{X}}\widehat{H}_{\theta_l^{\star},l}(x)\left\{
\widehat{H}_{\theta_l^{\star},l-1}(y) - \widehat{H}_{\theta_{l-1}^{\star},l-1}(y)
\right\}
\check{\pi}_{\boldsymbol{\theta}_l^{\star},l}\left(d(x,y)\right).
\end{eqnarray*}
So to prove the result we will consider bounding the three terms $T_1,T_2,T_3$ individually.

For $T_1$ we have that
$$
T_1 = \int_{\mathsf{X}\times\mathsf{X}}
V_{\theta_l^{\star},l}(x)
\frac{\widehat{H}_{\theta_l^{\star},l}(x)}{V_{\theta_l^{\star},l}(x)^{1/2}}\left\{
\frac{\widehat{H}_{\theta_l^{\star},l}(x)-
\widehat{H}_{\theta_l^{\star},l-1}(x)}{V_{\theta_l^{\star},l}(x)^{1/2}}\right\}
\check{\pi}_{\boldsymbol{\theta}_l^{\star},l}\left(d(x,y)\right).
$$
By (A\ref{ass:1}) \ref{ass:15} for any $(\theta,l)\in\Theta\times\mathbb{N}$,  $H_{\theta,l}\in\mathscr{L}_{V_{\theta,l}^{1/2}}$,  so then by using the representation \eqref{eq:pois_eq} and the bound \eqref{eq:lem12} we have that 
\begin{equation}\label{eq:lem41}
\frac{|\widehat{H}_{\theta,l}(x)|}{V_{\theta,l}(x)^{1/2}} \leq C
\end{equation}
where $C$ does not depend on $\theta,l,x$.  Then one can use Lemma \ref{lem:pois_perturb}
to deduce that
$$
|T_1| \leq C \pi_{\theta^{\star}_l,l}(V_{\theta,l})\left(
\{|H_{\theta^{\star}_l,l-1}|_{V_{\theta^{\star}_l,l}^{1/2}}
+ |H_{\theta^{\star}_l,l}-H_{\theta^{\star}_l,l-1}|_{V_{\theta^{\star}_l,l-1}^{1/2}}
\}
\Delta_l^{\beta} +
|
K_{\theta^{\star}_l}\left(
H_{\theta^{\star}_l,l}-H_{\theta^{\star}_l,l-1}\right)
|_{V_{\theta^{\star}_l,l-1}^{1/2}}
\right).
$$
Then using (A\ref{ass:1}) \ref{ass:15} and \ref{ass:18} we have
$$
|T_1| \leq C \Delta_l^{\beta}\pi_{\theta^{\star}_l,l}(V_{\theta^{\star}_l,l}).
$$
For $T_2$ one can combine \eqref{eq:lem41},  Lemma \ref{lem:pois_cont} and Caucy-Schwarz to see that
$$
|T_2| \leq C 
\pi_{\theta^{\star}_l,l}(V_{\theta^{\star}_l,l})^{1/2}
\check{\pi}_{\boldsymbol{\theta}_l^{\star},l}\left(
\mathcal{D}^2
\right)^{1/2}.
$$
For $T_3$ one can use \eqref{eq:lem41} and Lemma \ref{lem:pois_cont_theta}, yielding
$$
|T_3| \leq C 
\pi_{\theta^{\star}_l,l}(V_{\theta^{\star}_l,l}^{1/2})|\theta_l^{\star}-\theta_{l-1}^{\star}|^{\zeta}.
$$
Combining the bounds on $T_1,T_2, T_3$ allows us to conclude.
\end{proof}

\begin{lem}\label{lem:av1}
Assume (A\ref{ass:1}) \ref{ass:11}-\ref{ass:114}.  Then there exists a $C\in(0,\infty)$ such that
for any $(l,\theta^{\star}_l,\theta^{\star}_{l-1})\in\mathbb{N}\times\Theta^2$  
$$
\max\Bigg\{
\left|\check{\pi}_{\boldsymbol{\theta}_l^{\star},l}\left(
K_{_{\theta_l^{\star},l}}(\widehat{H}_{\theta_l^{\star},l})^2\otimes 1 - 
K_{_{\theta_l^{\star},l}}(\widehat{H}_{\theta_{l}^{\star},l})\otimes 
K_{_{\theta_{l-1}^{\star},l-1}}(\widehat{H}_{\theta_{l-1}^{\star},l-1})
\right)\right|,
$$
$$
\left|\check{\pi}_{\boldsymbol{\theta}_l^{\star},l}\left(
1\otimes K_{_{\theta_{l-1}^{\star},l-1}}(\widehat{H}_{\theta_{l-1}^{\star},l-1})^2- 
K_{_{\theta_l^{\star},l}}(\widehat{H}_{\theta_{l}^{\star},l})\otimes 
K_{_{\theta_{l-1}^{\star},l-1}}(\widehat{H}_{\theta_{l-1}^{\star},l-1})
\right)\right|
\Bigg\} \leq 
$$
$$
C\pi_{\theta^{\star}_l,l}(V_{\theta^{\star}_l,l})\left(
\Delta_l^{\beta} + |\theta_l^{\star}-\theta_{l-1}^{\star}|^{\zeta} + \check{\pi}_{\boldsymbol{\theta}_l^{\star},l}\left(
\mathcal{D}^2
\right)^{1/2}
\right)
$$
where $\beta\in(0,\infty)$ is as (A\ref{ass:1}) \ref{ass:17}, $\zeta\in(1/2,1]$ is as (A\ref{ass:1}) \ref{ass:110} and $\mathcal{D}$ is as (A\ref{ass:1}) \ref{ass:112}.
\end{lem}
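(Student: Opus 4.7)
The plan is to mirror, almost verbatim, the strategy used for Lemma \ref{lem:av}, exploiting that the three auxiliary lemmata (Lemma \ref{lem:pois_cont}, Lemma \ref{lem:pois_perturb}, Lemma \ref{lem:pois_cont_theta}) all provide bounds simultaneously for $\widehat{\varphi}_{\theta,l}$ and $K_{\theta,l}(\widehat{\varphi}_{\theta,l})$. By symmetry, it suffices to bound the first of the two displayed quantities. Writing $A_{\theta,l}(x):=K_{\theta,l}(\widehat{H}_{\theta,l})(x)$ for brevity, I would insert two pivots and decompose
\begin{eqnarray*}
A_{\theta_l^{\star},l}(x)^2 - A_{\theta_l^{\star},l}(x)\,A_{\theta_{l-1}^{\star},l-1}(y) & = & A_{\theta_l^{\star},l}(x)\bigl\{A_{\theta_l^{\star},l}(x) - A_{\theta_l^{\star},l-1}(x)\bigr\} \\
& & + A_{\theta_l^{\star},l}(x)\bigl\{A_{\theta_l^{\star},l-1}(x) - A_{\theta_l^{\star},l-1}(y)\bigr\} \\
& & + A_{\theta_l^{\star},l}(x)\bigl\{A_{\theta_l^{\star},l-1}(y) - A_{\theta_{l-1}^{\star},l-1}(y)\bigr\},
\end{eqnarray*}
and integrate term by term against $\check{\pi}_{\boldsymbol{\theta}_l^{\star},l}$, calling the resulting contributions $T_1, T_2, T_3$.

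The first preparatory step is to establish the analogue of \eqref{eq:lem41}, namely $|A_{\theta,l}(x)|\leq C\,V_{\theta,l}(x)^{1/2}$ uniformly in $(\theta,l)$. This follows directly from \eqref{eq:lem41} together with Jensen's inequality and the drift condition (A\ref{ass:1}) \ref{ass:12}, since $K_{\theta,l}(V_{\theta,l}^{1/2})(x)\leq K_{\theta,l}(V_{\theta,l})(x)^{1/2}\leq (\lambda V_{\theta,l}(x)+b)^{1/2}\leq C\,V_{\theta,l}(x)^{1/2}$ as $V_{\theta,l}\geq 1$. With this uniform $V^{1/2}$-bound on $A$ in hand, each of $T_1,T_2,T_3$ is handled in the same way as the corresponding term in Lemma \ref{lem:av}: for $T_1$ I would factor out $V_{\theta_l^{\star},l}(x)^{1/2}$ from $A_{\theta_l^{\star},l}(x)$, apply the $K$-version of Lemma \ref{lem:pois_perturb} with $\varphi_{\theta,l}=H_{\theta,l}$, and then invoke (A\ref{ass:1}) \ref{ass:15} and \ref{ass:18} to absorb the $|H_{\theta,l}-H_{\theta,l-1}|$ and $|K_{\theta}(H_{\theta,l}-H_{\theta,l-1})|$ factors into a bound $C\,\Delta_l^{\beta}\,\pi_{\theta_l^{\star},l}(V_{\theta_l^{\star},l})$.

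For $T_2$, I would apply the $K$-version of Lemma \ref{lem:pois_cont} to see that $x\mapsto A_{\theta_l^{\star},l-1}(x)$ is $\mathcal{D}$-Lipschitz (the Lipschitz seminorm of $H_{\theta,l-1}$ being controlled by (A\ref{ass:1}) \ref{ass:114}), then use Cauchy--Schwarz in $\check{\pi}_{\boldsymbol{\theta}_l^{\star},l}$ to obtain a factor $\pi_{\theta_l^{\star},l}(V_{\theta_l^{\star},l})^{1/2}\,\check{\pi}_{\boldsymbol{\theta}_l^{\star},l}(\mathcal{D}^2)^{1/2}$. For $T_3$, the $K$-version of Lemma \ref{lem:pois_cont_theta}, applied with $\varphi_{\theta,l-1}=H_{\theta,l-1}$ (which is $\zeta$-Hölder in $\theta$ by (A\ref{ass:1}) \ref{ass:111}), gives the bound $C\,\pi_{\theta_l^{\star},l}(V_{\theta_l^{\star},l}^{1/2})\,|\theta_l^{\star}-\theta_{l-1}^{\star}|^{\zeta}$. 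Summing the three contributions yields the claimed inequality.

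The main obstacle I anticipate is purely bookkeeping: tracking that the $V$-norms produced by Lemmata \ref{lem:pois_perturb} and \ref{lem:pois_cont_theta} (which are stated in $V_{\theta,l}^{r}$ for generic $r\in(0,1]$) align correctly with the $V_{\theta_l^{\star},l}^{1/2}$ factor extracted from $A_{\theta_l^{\star},l}(x)$ so that the final integrand is dominated by $V_{\theta_l^{\star},l}$, allowing the bound $\pi_{\theta_l^{\star},l}(V_{\theta_l^{\star},l})$ to emerge cleanly. All other steps are direct analogues of the proof of Lemma \ref{lem:av}, since the auxiliary lemmata were stated precisely to cover both $\widehat{\varphi}$ and $K(\widehat{\varphi})$ in the same breath.
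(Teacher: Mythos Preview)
Your proposal is correct and matches the paper's own approach exactly: the paper's proof of Lemma \ref{lem:av1} consists of the single line ``This is the same as Lemma \ref{lem:av}'', and your write-up is precisely the spelled-out version of that remark, using the $K$-parts of Lemmata \ref{lem:pois_cont}, \ref{lem:pois_perturb}, \ref{lem:pois_cont_theta} in place of the $\widehat{\varphi}$-parts.
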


\begin{proof}
This is the same as Lemma \ref{lem:av}.
\end{proof}

\end{document}